\newcommand\ReDeclareMathOperator{
\@ifstar{\def\rmo@s{m}\rmo@redeclare}{\def\rmo@s{o}\rmo@redeclare}}
\newcommand\rmo@redeclare[2]{
\begingroup \escapechar\m@ne\xdef\@gtempa{{\string#1}}\endgroup
\expandafter\@ifundefined\@gtempa
{\@latex@error{\noexpand#1undefined}\@ehc}
\relax
\expandafter\rmo@declmathop\rmo@s{#1}{#2}}
\newcommand\rmo@declmathop[3]{
\DeclareRobustCommand{#2}{\qopname\newmcodes@#1{#3}}}
\ReDeclareMathOperator{\AE}{AE}
\DeclareMathOperator{\wind}{wind}
\DeclareSymbolFont{eulargesymbols}{U}{zeuex}{m}{n}
\DeclareMathSymbol{\intop}{\mathop}{eulargesymbols}{"52}
\DeclareMathSymbol{\ointop}{\mathop}{eulargesymbols}{"49}
\newcommand{\al}{\alpha}
\newcommand{\bC}{\mathbb C}
\newcommand{\bD}{\mathbb D}
\newcommand{\be}{\beta}
\newcommand{\bN}{\mathbb N}
\newcommand{\bR}{\mathbb R}
\newcommand{\bT}{\mathbb T}
\newcommand{\bZ}{\mathbb Z}
\newcommand{\cD}{\mathcal D}
\newcommand{\cR}{\mathcal R}
\newcommand{\de}{\delta}
\newcommand{\De}{\Delta}
\newcommand{\e}{\hspace{0.6pt}\mathrm{e}}
\newcommand{\eps}{\varepsilon}
\newcommand{\ga}{\gamma}
\newcommand{\Ga}{\Gamma}
\newcommand{\ka}{\kappa}
\newcommand{\la}{\lambda}
\newcommand{\La}{\Lambda}
\newcommand{\nf}{\infty}
\newcommand{\ol}{\overline}
\newcommand{\om}{\omega}
\newcommand{\ph}{\varphi}
\newcommand{\si}{\sigma}
\newcommand{\tht}{\theta}
\newcommand{\vro}{\varrho}
\newcommand{\x}{\raisebox{0.2mm}{\mbox{\tiny $\times$}\hspace{-0.3mm}}}
\newcommand{\ze}{\zeta}
\renewcommand{\ge}{\geqslant}
\renewcommand{\i}{\hspace{0.2pt}\mathrm{i}}
\renewcommand{\le}{\leqslant}
\renewcommand{\Re}{\mathfrak{\,Re}}
\def\YYint#1#2#3{{\setbox0=\hbox{$#1{#2#3}{\int}$}
\vcenter{\hbox{$#2#3$}}\kern-.52\wd0}}
\tikzstyle arrowstyle=[scale=1]
\tikzstyle directed=[postaction={decorate,decoration={markings,
    mark=at position .65 with {\arrow[arrowstyle]{stealth}}}}]
\tikzstyle reverse directed=[postaction={decorate,decoration={markings,
    mark=at position .65 with {\arrowreversed[arrowstyle]{stealth};}}}]
\numberwithin{equation}{section}
\newtheorem{lemma}{Lemma}[section]
\newtheorem{theorem}[lemma]{Theorem}
\newtheorem{conjecture}[lemma]{Conjecture}
\theoremstyle{definition}
\newtheorem{remark}[lemma]{Remark}
\title{\textsc{Fast non-Hermitian Toeplitz eigenvalue computations, joining matrix-less algorithms and FDE approximation matrices}}
\author[1]{M. Bogoya\thanks{johan.bogoya@correounivalle.edu.co}}
\author[2]{S.M. Grudsky\thanks{grudsky@math.cinvestav.mx}}
\author[3]{S. Serra--Capizzano\thanks{s.serracapizzano@uninsubria.it}}
\affil[1]{\footnotesize Universidad del Valle, Cali, Colombia.\\
Departamento de Matem\'aticas.}
\affil[2]{\footnotesize CINVESTAV, Mexico, Mexico D.F.\\
Departamento de Matem\'aticas.}
\affil[3]{\footnotesize University of Insubria, Como, Italy.\\
Dipartimento di Scienza e Alta Tecnologia.}
\date{\small\today}
\begin{document}

\maketitle

\begin{abstract}
The present work is devoted to the eigenvalue asymptotic expansion of the Toeplitz matrix $T_{n}(a)$ whose generating function $a$ is complex valued and has a power singularity at one point. As a consequence, $T_{n}(a)$ is non-Hermitian and we know that the eigenvalue computation is a non-trivial task in the non-Hermitian setting for large sizes. We follow the work of Bogoya, Böttcher, Grudsky, and Maximenko and deduce a complete asymptotic expansion for the eigenvalues. After that, we apply matrix-less algorithms, in the spirit of the work by Ekström, Furci, Garoni, Serra-Capizzano et al, for computing those eigenvalues. Since the inner and extreme eigenvalues have different asymptotic behaviors, we worked on them independently, and combined the results to produce a high precision global numerical and matrix-less algorithm.

The numerical results are very precise and the computational cost of the proposed algorithms is independent of the size of the considered matrices for each eigenvalue, which implies a linear cost when all the spectrum is computed. From the viewpoint of real world applications, we emphasize that the matrix class under consideration includes the matrices stemming from the numerical approximation of fractional diffusion equations. In the final conclusion section a concise discussion on the matter and few open problems are presented.
\medskip\\
\noindent\textbf{MSC Classes}: Primary 15B05, 65F15, 47B35. Secondary 15A18, 47A38.
\end{abstract}

\section{Introduction}

Let $\bT$ be the complex unit circle. For a complex-valued function $a\in L^{1}(\bT)$ let $a_{k}$ be its $k$th Fourier coefficient, that is
\[a_{k}\equiv\frac{1}{2\pi}\int_{0}^{2\pi}a(\e^{\i\tht})\e^{-\i k\tht}\dif\tht,\quad\ k\in\bZ, \ \i^2=-1.\]
The $n\times n$ Toeplitz matrix with {\em generating function} $a$ is given by $T_{n}(a)\equiv(a_{j-k})_{j,k=0}^{n-1}$, and is characterized by having the fixed constant $a_k$ along the $k$th diagonal, parallel to the main one. In formulae, we have
\begin{equation*}
T_{n}(a)=\begin{bmatrix}
a_0 & a_{-1} & a_{-2} & \cdots & \cdots & a_{-(n-1)} \\
a_{1} & \ddots & \ddots & \ddots & & \vdots\\
a_{2} & \ddots & \ddots & \ddots & \ddots & \vdots\\
\vdots & \ddots & \ddots & \ddots & \ddots & a_{-2}\\
\vdots & & \ddots & \ddots & \ddots & a_{-1}\\
a_{n-1} & \cdots & \cdots & a_{2} & a_{1} & a_0
\end{bmatrix}.
\end{equation*}

We will follow the notation of \cite{BoBo12}. Let $H^{\nf}$ be the Hardy space of (boundary values of) bounded analytic functions in the complex unit disk $\bD$, $\bT=\partial \bD$. For a continuous generating function $a$, we denote by
\[
\wind_{\la}(a)\equiv\frac{1}{2\pi\i}\oint_{\cR(a)}\frac{\dif \ze}{\ze-\la},
\]
the winding number of $a$ in connection with a point $\la\in\bC\setminus\cR(a)$ where $\cR(a)$ is the range of $a$. Intuitively, $\wind_{\la}(a)$ is the number of times that the range of $a$ travels around the point $\la$ in counterclockwise sense. We also denote by $\cD(a)$ the set of all $\la\in\bC$ for which $\wind_{\la}(a)\ne0$. In plane words $\cD(a)$ is the collection of all the points trapped by $\cR(a)$. In the first part of the current work, we will study the eigenvalues of $T_{n}(a)$ for functions $a(t)=\frac{1}{t}(1-t)^{\al}f(t)$ satisfying the following properties:
\begin{enumerate}[(i)]
\item The constant $\al$ belongs to the interval $(0,1)$ and $f\in C^{\nf}(\bT)\cap H^{\nf}$.
\item $f(z)\ne0$ for $z\in\ol\bD$, and $f$ has an analytic extension to an open neighborhood $W$ of $\bT\setminus\{1\}$ not containing the point $1$.
\item The range of $a$ is a Jordan curve in $\bC$ which is the boundary of a connected region in the complex plane.
\end{enumerate}

Strictly speaking, the complex-valued function $(1-z)^{\al}$ is multi-valued, hence we take the argument specified by $\arg(1-z)^{\al}\in(-\al\pi,\al\pi]$ when $\arg(1-z)\in(-\pi,\pi]$. Property~(ii) is necessary because, in that case, $b$ belongs to the Hardy space $H^{\nf}$ and the nice relation $T_{n}^{-1}(b)=T_{n}\big(\frac{1}{b}\big)$ holds, which is used for expressing the determinant $D_{n}(a)$ of $T_{n}(a)$ as a Fourier integral. Additionally, the condition $\al\in(0,1)$ implies that $a$ has a power singularity at $t=1$, i.e. no continuous derivatives there.

At this point it is worth noticing that the considered set of functions includes the generating functions of a class of Hessenberg Toeplitz matrices stemming from the numerical approximation of certain fractional diffusion equations (FDEs); see \cite{BoGr22b,BoGr22a,DoMa16}. In the latter context, the parameter $\al$ belongs to the open interval $(0,2)$. We remind that the quoted FDEs with noninteger parameter $\al \in (0,2)$ have gained a tremendous attention in real world applications, because they model anomalous diffusion processes arising in Biology, Physics, etc. (see again \cite{DoMa16} and references therein).

Property (ii) also guaranties that the only point of $\bT$ where $a$ is not analytic is $t=1$, and finally, property (iii) amounts in avoiding that $\cR(a)$ shows the presence of loops. The resulting Toeplitz matrices $T_{n}(a)$ are lower Hessenberg, meaning that they can be obtained by adding one upper diagonal to a lower triangular matrix.

We emphasize that this knowledge is important when analyzing fast methods for the solution of large linear systems arising from the numerical approximation of the considered FDEs, via preconditioned Krylov methods, multigrid algorithms, and multi-iterative solvers (see e.g. \cite{DoGa15} and references therein).

In \cite{BoBo12} an asymptotic expansion was deduced for the eigenvalues bounded away from zero, which the authors named {\em inner}, while in \cite{BoGr18} the {\em extreme} (closest to zero) eigenvalues were studied. With a different technique, the authors were able to obtain an asymptotic expression for the eigenvalues located near to the singularity point. The result is reminiscent of the works by S. Parter \cite{Pa61a,Pa61b} and H. Widom \cite{Wi58}, for the real-valued case; see also \cite{BoGr98,Se98b,SeTi99,Ti03} for a more complete picture on the subject, including newer matrix theoretic tools.

The case $\al\in(1,2)$ or more generally, $\al\in\bR_{+}\setminus\bZ$, can be managed analogously for the inner eigenvalues, while the extreme eigenvalues remain a challenge that will be the subject of a future investigation. For this reason we restrict ourselves to the case $\al\in(0,1)$.

Regarding computational costs, the best option relies in employing extrapolation techniques based on the asymptotic expansion, because the latter leads to a direct eigenvalue approximation which is independent of the matrix entries or its size. Indeed the computation can be done in two different ways: the first one is to use the analytic expression of the coefficient functions, if available; the second one is to employ a numerical and matrix-less algorithm to approximate them. After the respective precomputing phases, in both cases, the calculation will need only a fraction of a second in a standard computer. The standard option is to use any commercial eigensolver which are limited by the matrix size and its conditioning. To put it in numbers, the eigenvalue calculation of a $4000$ size moderately conditioned matrix with 50 precision digits, can be done in a week by a standard computer. If the matrix size is $10000$, only a supercomputer can handle the same task, but if the matrix size is $10^{10}$ (a size appearing in several applications) no computer can do it.

Taking inspiration from the literature, in the present work we split our study into the mentioned cases. The article is organized as follows: in Section~\ref{sc:inner eigs} we deduce a complete asymptotic expansion for the inner eigenvalues of $T_{n}(a)$ which served as the theoretical ground for the subsequent matrix-less numerical algorithm. In Section~\ref{sc:extreme eigs} we use a previous theoretical result to conjecture an expansion which was the base for the respective matrix-less numerical algorithm. In Section~\ref{sc:num} we join the two processes in order to design a global algorithm approximating all the eigenvalues with high accuracy: in this section we also report and discuss our numerical experiments. Finally, in Section~\ref{sc:end} we draw some future works.

\section{Inner eigenvalues}\label{sc:inner eigs}

In the paper \cite{BoBo12}, the authors employed the Baxter--Schmidt formula (see \cite[\S2.3]{BoGr05}) for Toeplitz determinants to express $D_{n}(a-\la)$ as a Fourier integral. In particular they also showed that if $W_{0}$ is a small open neighborhood of zero in $\bC$, for every point $\la\in\cD(a)\cap(a(W)\setminus W_{0})$, there exists a unique point $t_{\la}\in\bC$, with $|t_{\la}|>1$ such that $a(t_{\la})=\la$. See Figure~\ref{fg:Neigh}.

In order to attain an asymptotic expansion for the eigenvalues of $T_{n}(a)$, we are based on \cite{BoBo12} where the authors obtained an asymptotic expression involving only two terms, our aim is to obtain an expression with any number of terms.
The following is our main result.

\begin{figure}[H]
\centering
\includegraphics[width=0.9\textwidth]{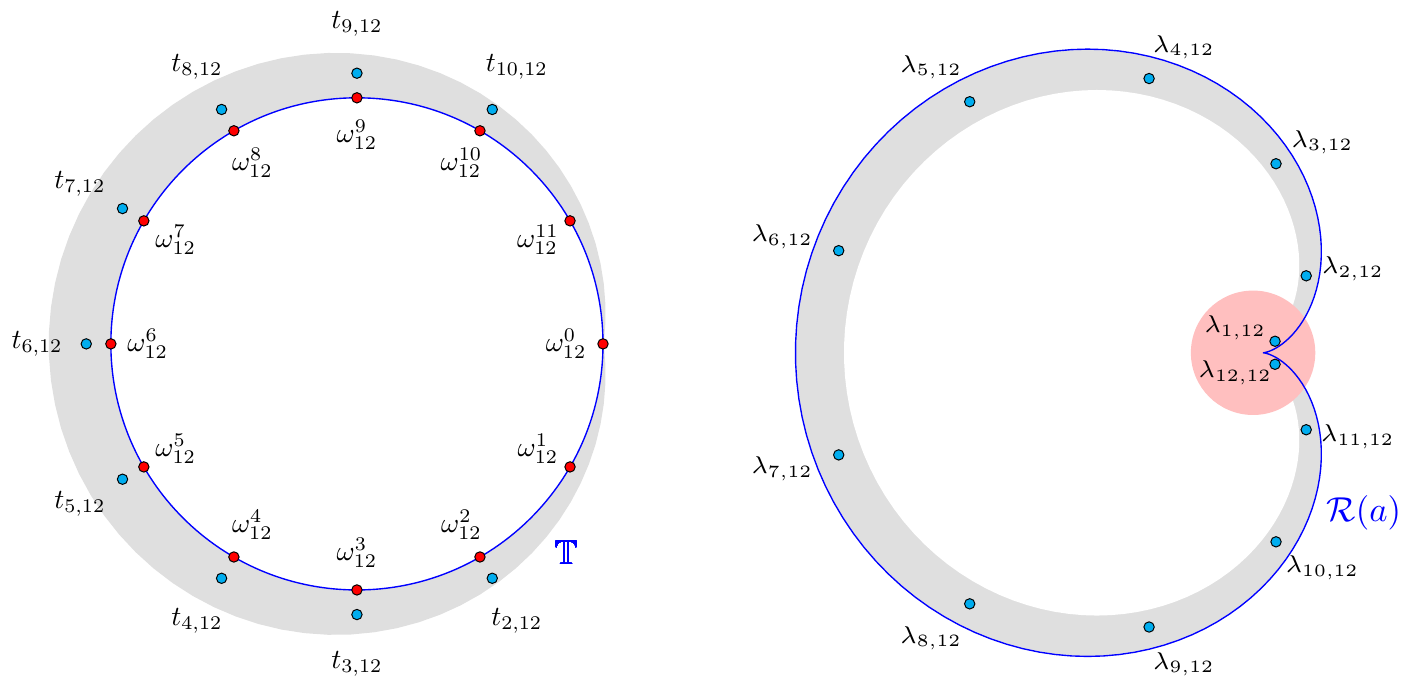}
\caption{The range and neighborhoods of the generating function $a(t)=\frac{1}{t}(1-t)^{\al}$ with $\al=\frac{3}{4}$. Left: For $n=12$ the red dots are the roots of unity $\om_{n}^{j}$, the cyan dots are the sequence $(t_{j,n})$ described in Theorem~\ref{th:InnerExp}, that is $\la_{j}(T_{n}(a))=a(t_{j,n})$, $j=0,\ldots,11$. The gray shaded region is the neighborhood $W\setminus\ol{\bD}$ where $a$ has an analytic extension and where the sequence $t_{j,n}$ is located. Right: For $n=12$ the cyan dots are the eigenvalues $\la_{j}(T_{12}(a))=\la_{j,12}$, $j=1,\ldots,12$, the gray shaded region is the neighborhood $a(W\setminus\ol{\bD})$ where the inner eigenvalues are located. The light red shaded region is the neighborhood $W_{0}=B(0,\eps)$, where the extreme eigenvalues are located.}\label{fg:Neigh}
\end{figure}

\begin{theorem}\label{th:InnerExp}
Let $a(t)=\frac{1}{t}b(t)$ be a function satisfying the properties \textup{(i)--(iii)}. Then for every small open neighborhood $W_{0}$ of zero in $\bC$ and every index $j$ with $a(\om_{n}^{j})\notin W_{0}$, $t_{j,n}\equiv t_{\la_{j,n}}$ admits the following asymptotic expansion
\begin{eqnarray*}
t_{j,n}&\sim&\om_{n}^{j}n^{(\al+1)\frac{1}{n}}\bigg\{1+\sum_{r=0}^{\nf} \sum_{m=1}^{\nf}\sum_{\ell=0}^{m-1}p_{r,m,\ell}(\om_{n}^{j})\frac{\log^{\ell}(n)}{n^{\al r+m}}\bigg\}\\
&=&\om_{n}^{j}\bigg\{1+\sum_{r=0}^{\nf}\sum_{m=1}^{\nf}\sum_{\ell=0}^{m-1} q_{r,m,\ell}(\om_{n}^{j})\frac{\log^{\ell}(n)}{n^{\al r+m}}\bigg\},
\end{eqnarray*}
where
\begin{itemize}
\item $\la_{j}(T_{n}(a))=a(t_{j,n})$ and $\om_{n}\equiv\exp(-\frac{2\pi\i}{n})$;
\item the coefficients $p_{r,m,\ell}$ and $q_{r,m,\ell}$ are functions in $C(\bT)$ depending only on the generating function $a$.
\end{itemize}
\end{theorem}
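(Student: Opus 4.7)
The plan is to start from the implicit characteristic equation for the inner eigenvalues derived in~\cite{BoBo12}. Using the Baxter--Schmidt formula for $D_{n}(a-\la)$ and the substitution $\la=a(t)$ (legitimate for $\la\in a(W)\setminus W_{0}$ thanks to the existence and uniqueness of $t_{\la}$ recalled just before the statement), the condition $\det T_{n}(a-\la)=0$ can be rewritten, after peeling off nonvanishing factors, as an equation of the shape $t^{n}=\Psi(t,n)$, where $\Psi$ splits into a factor depending analytically on $t$ (built from $f$ and the $1/t$ inside $a$) and an explicit $n$-dependent contribution whose asymptotic expansion at $n\to\nf$ carries all the fractional powers $n^{-\al r}$. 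These fractional powers arise from the sharp singularity $(1-t)^{\al}$ at $t=1$ after the Fourier-integral analysis of~\cite{BoBo12}.

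Taking logarithms and selecting the $j$-th branch, this becomes the fixed-point identity
\[
t \;=\; \om_{n}^{j}\,n^{(\al+1)/n}\exp\!\Big(\tfrac{1}{n}\,\Ph(t,n)\Big),
\]
where $\Ph(t,n)=\log\Psi(t,n)-(\al+1)\log n$ admits, by construction, a complete asymptotic expansion in the scales $\log^{\ell}(n)/n^{\al r+m-1}$. The prefactor $\om_{n}^{j}\,n^{(\al+1)/n}$ is precisely the leading behaviour announced in the theorem. I would then iterate: substitute the leading approximation into the right-hand side, expand $\Ph$ in $t$ around $\om_{n}^{j}$ (admissible since $\Psi$ is analytic and bounded away from zero on the relevant part of $W\setminus\ol\bD$ thanks to property (ii) together with $a(\om_{n}^{j})\notin W_{0}$) and expand $\exp$ around the origin; each step improves the error by an additional factor $1/n$ or $1/n^{\al}$, so after finitely many iterations any prescribed order is reached.

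The crucial bookkeeping is an induction proving that every correction produced by the iteration fits the template $\log^{\ell}(n)/n^{\al r+m}$ with $\ell\le m-1$. The integer $m$ is incremented by every factor $1/n$ coming from the analytic part of $\Ph$ or from expanding $\exp$; the index $r$ is incremented when a $(1-t)^{-\al}$-type factor is re-expanded (because $t$ itself differs from $\om_{n}^{j}$ by $O(\log n/n)$, raising to the $-\al$ power generates all multiples $\al r$); and the logarithmic factors $\log^{\ell}n$ are produced only by expanding $n^{(\al+1)/n}=\exp((\al+1)\log n/n)$ when it is multiplied through the series, yielding by direct counting the bound $\ell\le m-1$. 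The resulting coefficients are finite polynomial combinations of values and derivatives of $f$ together with powers of $(1-z)^{-\al}$ evaluated at $\om_{n}^{j}\in\bT$, and the restriction $a(\om_{n}^{j})\notin W_{0}$ keeps the point uniformly away from $t=1$, so each such combination defines a continuous function $p_{r,m,\ell}\in C(\bT)$.

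The main obstacle, in my view, is not the existence of the formal series but rather the rigorous control of the remainder after each iteration step together with the combinatorial bookkeeping of the three indices $(r,m,\ell)$ establishing the structural bound $\ell\le m-1$. Once the first expansion is proved, the equivalent second form follows by expanding $n^{(\al+1)/n}=\sum_{k\ge 0}(\al+1)^{k}\log^{k}(n)/(k!\,n^{k})$, multiplying by the bracketed series in the first expansion, and regrouping by the same $(r,m,\ell)$ template; this operation preserves the template because products of the form $\log^{\ell_{1}}\!n\,/n^{m_{1}}\cdot \log^{\ell_{2}}\!n\,/n^{\al r_{2}+m_{2}}$ satisfy $\ell_{1}+\ell_{2}\le m_{1}+m_{2}-1$ under the inductive constraints, so the $q_{r,m,\ell}$ inherit the continuity and the bound from the $p_{r,m,\ell}$.
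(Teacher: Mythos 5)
Your overall route is essentially the one the paper takes: from the Baxter--Schmidt/Fourier-integral analysis of $D_{n}(a-\la)$ one extracts an implicit fixed-point equation of the form $t_{\la}\sim\om_{n}^{j}n^{(\al+1)/n}\{1+\sum_{r\ge0,\,m\ge1}\xi_{r,m}(t_{\la})\,n^{-\al r-m}\}$ (this is the paper's~\eqref{eq:Rec}, whose exponential rewriting is exactly your $t=\om_{n}^{j}n^{(\al+1)/n}\exp(\tfrac{1}{n}\Ph(t,n))$), and then one iterates this recursion, tracking which scales $\log^{\ell}n/n^{\al r+m}$ each iteration can produce. The paper also isolates a determinant-expansion step (its Theorem~\ref{th:DetExp}), obtained by pushing the stationary-phase expansion of the Fourier integral to all orders; you subsume this in ``the Fourier-integral analysis of~\cite{BoBo12}'', which is a harmless compression.

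There is, however, one concrete error in your bookkeeping, and it concerns the very mechanism you put at the heart of the induction. You attribute the increments of $r$ to \emph{re-expanding a $(1-t)^{-\al}$-type factor at $t$ near $\om_{n}^{j}$}, arguing that because $t-\om_{n}^{j}=O(\log n/n)$, raising to the power $-\al$ produces the multiples $\al r$. This cannot be right: the hypothesis $a(\om_{n}^{j})\notin W_{0}$ keeps $\om_{n}^{j}$ uniformly away from the branch point $t=1$, so every coefficient function in the implicit equation (the $\xi_{r,m}$, which do contain powers of $(1-t)^{-\al}$) is $C^{\nf}$ in a neighbourhood of $\om_{n}^{j}$. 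Its Taylor expansion about $\om_{n}^{j}$ produces \emph{integer} powers of $t-\om_{n}^{j}$ only, hence integer increments of $m$, never fractional ones. The fractional scales $n^{-\al r}$ are already baked into the asymptotic expansion of the determinant (they come from the $(1-t)^{\al}$ singularity at $t=1$, far from $\om_{n}^{j}$, through the stationary-phase/Fedoryuk lemma applied to the Fourier integral), and they accumulate in the iteration purely multiplicatively: when a correction of scale $\log^{\ell'}n/n^{\al r'+m'}$ is substituted into a term of scale $n^{-\al r-m}$, the product has scale $\log^{\ell'}n/n^{\al(r+r')+(m+m')}$. Your induction on the $(r,m,\ell)$ template should be reorganised around this multiplicative composition of scales rather than around a nonexistent re-expansion of $(1-t)^{-\al}$ at $\om_{n}^{j}$; otherwise the step ``$r$ increments are produced'' has no valid justification. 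The identification of $n^{(\al+1)/n}$ as the sole primary source of the $\log n$ factors, and the continuity/regularity argument for the coefficients, are fine and match the paper.
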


To help the visualization of the expansion in the previous theorem, take $h\equiv\frac{1}{n}$ and arrange the involved powers of $h$ with the following array:

\medskip
\begin{center}
\begin{tabular}{lllllll}
$h$ & & & $h^{\al+1}$ & & & $\cdots$\\[1ex]
$h^{2}\log(n)$ & $h^{2}$ & & $h^{\al+2}\log(n)$ & $h^{\al+2}$ & & $\cdots$\\[1ex]
$h^{3}\log^{2}(n)$ & $h^{3}\log(n)$ & $h^{3}$ & $h^{\al+3}\log^{2}(n)$ & $h^{\al+3}\log(n)$ & $h^{\al+3}$ & $\cdots$\\[1ex]
$\vdots$ & & & $\vdots$ & & & $\ddots$\\
\end{tabular}
\end{center}
\medskip
 
\noindent Hence the terms can be ordered, but the ordering depends on $\al$. In \cite[Th.\,1.2]{BoBo12} the coefficient $p_{0,1,0}$ was exactly calculated as
\[p_{0,1,0}(z)=\,\log\Big(\frac{a^{2}(z)}{c\,a'(z) z^{2}}\Big)
\quad\mbox{with}\quad c=\frac{1}{\pi}f(1)\Ga(\al+1)\sin(\al\pi),\]
where $\Ga$ is the well-known Gamma function. In this section, instead of the exact calculation of the remaining coefficients $p_{r,m,\ell}$ or $q_{r,m,\ell}$, we are interested in justifying the whole expansion and then, use it for proposing a matrix-less numerical algorithm, in the same fashion as in  \cite{BoEk22a,BoSe22a,BoSe22c,EkFu18b,EkGa19,EkGa18}.
Indeed, we remind that the quoted procedures are all essentially of extrapolation type and hence the formal (or numerical) expression of the coefficients is not required, likewise the extrapolation techniques for the computation of definite integrals in the sense of Romberg \cite[\S3.4]{StBu10}.

In asymptotic analysis we say that a function $f$ admits the asymptotic expansion $\sum_{k=0}^{\nf}a_{k}(x-x_{0})^{k}$ at $x_{0}$, denoted as
\[f(x)\sim\sum_{k=0}^{\nf}a_{k}(x-x_{0})^{k},\]
if we have $f(x)=\sum_{k=0}^{m-1}a_{k}(x-x_{0})^{k}+O(|x-x_{0}|^{m})$ for every $m\in\bZ_{+}$. For proving Theorem~\ref{th:InnerExp} we will need some preliminary findings. We start with the following well-known result, which is, for example in \cite[p.\,97]{Fe77}. We present it here for reader convenience and without a proof.

\begin{theorem}\label{th:Fedoryuk}
Let $\ga>0$, $\de>0$, $u\colon[0,\de]\to\bC$ be a $C^{\nf}$ function with $u^{(m)}(\de)=0$ for all $m\ge0$. Then, as $n\to\nf$
\[\int_{0}^{\de}\tht^{\ga-1}u(\tht)\e^{\i n\tht}\dif\tht\sim\sum_{m=0}^{\nf} \frac{u^{(m)}(0)\Ga(m+\ga)\e^{\i\frac{\pi}{2}(m+\ga)}}{m!\,n^{m+\ga}}.\]
\end{theorem}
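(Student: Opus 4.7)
The strategy is to convert the eigenvalue equation $D_{n}(a-\la)=0$ into an implicit equation for $t=t_{j,n}$ with a controlled asymptotic expansion on its right-hand side, then invert it by a bootstrap argument around the leading approximation $\om_{n}^{j}n^{(\al+1)/n}$.

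\emph{Step 1 (implicit equation).} Starting from the Baxter--Schmidt representation of $D_{n}(a-\la)$ used in \cite{BoBo12}, I would substitute $\la=a(t)$ for $t$ in the analytic neighborhood $W\sm\ol\bD$. After factoring out the dominant $t^{n}$, the remaining integrand splits naturally into a smooth piece on $\bT\sm B(1,\de)$, treated by repeated integration by parts and yielding an expansion in pure integer powers of $1/n$, and a singular piece on $B(1,\de)\cap\bT$ carrying the $(1-z)^{\al}$ factor, treated by Theorem~\ref{th:Fedoryuk} with $\ga=\al$ and yielding an expansion in powers of $1/n^{\al+k}$. Combining these produces an implicit equation of the shape
\[
t^{n}=n^{\al+1}\,G(t,n),\qquad G(t,n)\sim\sum_{r,m\ge 0}\frac{g_{r,m}(t)}{n^{\al r+m}},
\]
with coefficients $g_{r,m}$ analytic in $t$ on $W\sm\ol\bD$ and depending only on $a$; the leading coefficient $g_{0,0}$ reproduces the one computed in \cite[Th.\,1.2]{BoBo12}.

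\emph{Step 2 (logarithm and bootstrap).} Taking logarithms on the branch that selects $t_{j,n}$ near $\om_{n}^{j}$ and using $\om_{n}=\exp(-2\pi\i/n)$ yields
\[
t_{j,n}=\om_{n}^{j}\,n^{(\al+1)/n}\,\exp\!\Big(\tfrac{1}{n}\log G(t_{j,n},n)\Big).
\]
I would then run the fixed-point iteration $t^{(0)}=\om_{n}^{j}n^{(\al+1)/n}$, $t^{(k+1)}=\om_{n}^{j}n^{(\al+1)/n}\exp(\tfrac{1}{n}\log G(t^{(k)},n))$, Taylor-expanding each $g_{r,m}$ around $\om_{n}^{j}$ at every step. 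Each occurrence of the correction $t^{(k)}-\om_{n}^{j}$ contributes an extra factor $1/n$, so finitely many iterations determine the expansion to any prescribed order $O(n^{-N})$. Two sources feed the $\log(n)$ powers seen in the statement: the factor $n^{(\al+1)/n}=\exp(\tfrac{(\al+1)\log n}{n})$ Taylor-expands to $1+\sum_{k\ge 1}(\al+1)^{k}(\log n)^{k}/(k!\,n^{k})$, and the iterated $\log G$ mixes this with the $n^{-\al r-m}$ scales. Together they generate precisely the triangular support $\ell\le m-1$ displayed after the theorem and produce the first form with coefficients $p_{r,m,\ell}$. Absorbing $n^{(\al+1)/n}$ back into the bracket by the same Taylor expansion and rearranging gives the second form with $q_{r,m,\ell}$. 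The passage from $\la_{j,n}$ to $t_{j,n}$ is legitimate because $a$ is locally invertible near each $\om_{n}^{j}$, thanks to property (iii) and the hypothesis $a(\om_{n}^{j})\notin W_{0}$, which excludes the degenerate behavior at the singularity.

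\textbf{Main obstacle.} The genuinely delicate point is combinatorial rather than analytic: one must verify that the triangular monomial array $\{n^{-\al r-m}(\log n)^{\ell}:r\ge 0,\,m\ge 1,\,0\le\ell\le m-1\}$ is closed under the three operations used throughout the bootstrap, namely multiplication, composition with $\exp$ and $\log$, and self-substitution into analytic Taylor series, and that the constraint $\ell\le m-1$ is preserved at every step. The subtlety is that the two scales $1/n$ and $1/n^{\al}$ interact in different ways depending on whether $\al$ is irrational (clean bigraded structure with no collisions) or rational (exponents may collide, requiring a merging rule to aggregate coefficients into a single $p_{r,m,\ell}$ or $q_{r,m,\ell}$). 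Once this algebraic closedness is secured, the asymptotic nature of the truncated remainders follows uniformly from the error bounds provided by Theorem~\ref{th:Fedoryuk} together with the analyticity furnished by properties (i)--(iii).
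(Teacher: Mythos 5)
You have not proved the statement in question. Theorem~\ref{th:Fedoryuk} is the Erd\'elyi/Fedoryuk lemma on the asymptotics of the one-dimensional Fourier integral $\int_{0}^{\de}\tht^{\ga-1}u(\tht)\e^{\i n\tht}\dif\tht$ with an algebraic singularity $\tht^{\ga-1}$ at the left endpoint and a $C^{\nf}$ amplitude vanishing to infinite order at $\tht=\de$. Your text is instead a proof plan for Theorem~\ref{th:InnerExp} (the eigenvalue expansion): it starts from $D_{n}(a-\la)=0$, builds the implicit equation for $t_{j,n}$, and runs the bootstrap around $\om_{n}^{j}n^{(\al+1)/n}$. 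Worse, in your Step~1 you explicitly invoke ``Theorem~\ref{th:Fedoryuk} with $\ga=\al$'' to handle the singular piece of the integrand, so as an argument for Theorem~\ref{th:Fedoryuk} itself the proposal is circular. For the record, the paper does not prove this theorem either: it is quoted without proof from Fedoryuk's book, so there is no internal proof to compare against, but that does not make your attempt an acceptable substitute.

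If you want to actually prove the statement, the standard route is as follows. Fix $M$, write $u(\tht)=\sum_{m=0}^{M-1}\frac{u^{(m)}(0)}{m!}\tht^{m}+R_{M}(\tht)$ with $R_{M}(\tht)=O(\tht^{M})$ together with all derivatives of the corresponding orders. For each monomial term one needs the model integral: using a neutralizer and rotating the contour onto the positive imaginary axis (justified for $n>0$ by Jordan-type decay, first for $0<m+\ga<1$ and then for all $m$ by analytic continuation or integration by parts) one gets
\[
\int_{0}^{\nf}\tht^{m+\ga-1}\e^{\i n\tht}\dif\tht=\Ga(m+\ga)\,\e^{\i\frac{\pi}{2}(m+\ga)}\,n^{-(m+\ga)},
\]
which is exactly the claimed coefficient; the tail $\int_{\de}^{\nf}$ contributes $O(n^{-\nf})$ because the hypothesis $u^{(m)}(\de)=0$ for all $m$ lets one cut off smoothly at $\de$ and integrate by parts arbitrarily many times without boundary terms. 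The remainder $\int_{0}^{\de}\tht^{\ga-1}R_{M}(\tht)\e^{\i n\tht}\dif\tht$ is shown to be $O(n^{-(M+\ga)})$ by $\lceil M\rceil$ integrations by parts, again with no boundary contributions at $\de$ and with integrable singularity at $0$. None of these ingredients appears in your proposal.
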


For a generating function $a(t)=\frac{1}{t}b(t)$ satisfying the properties (i)--(iii), we can write
\[b(\e^{\i\tht})=(1-\e^{\i\tht})^{\al}f(\e^{\i\tht})=\e^{-\frac{\i}{2}\al\pi}\tht^{\al} f_{0}(\tht)f(\e^{\i\tht}),\]
where $f_{0}$ belongs to the class $C^{\nf}[-\de,\de]$. Hence $b$ has a zero of order $\al$ at $\tht=0$.
Let $\de$ be a small positive constant, $\phi\colon\bR\to[0,1]$ a $C^{\nf}$ function with support in $[-\de,\de]$, and $\phi|_{[-\eps,\eps]}\equiv1$ for some $0<\eps<\de$. For each $r\in\bZ_{+}$ consider the auxiliary function
\begin{equation}\label{eq:urDef}
u_{r}(\tht)\equiv f_{0}^{r}(-\tht)f^{r}(\e^{-\i\tht})\phi(\tht)\e^{\i\tht(r+1)},
\end{equation}
and let $D_{n}(a)$ be the determinant of the Toeplitz matrix $T_{n}(a)$. The following result gives us an expansion for the determinant of $T_{n}(a-\la)$, which will be the starting point for obtaining the expansion of the eigenvalues.

\begin{theorem}\label{th:DetExp}
Let $a(t)=\frac{1}{t}b(t)$ be a function satisfying the properties \textup{(i)--(iii)}. Then the determinant of $T_{n}(a-\la)$ is given by
\[D_{n}(a-\la)=(-1)^{n}b_{0}^{n+1}\be_{n}(\la),\]
where $\be_{n}$ admits the asymptotic expansion
\[\be_{n}(\la)\sim\frac{-1}{t_{\la}^{n+2} a'(t_{\la})}-\frac{1}{\pi}
\sum_{r,m=1}^{\nf}\frac{\Ga(\al r+m)\Re\{u_{r}^{(m-1)}(0)\e^{\frac{\i}{2}\pi(2\al r+m)}\}}{(m-1)!a^{r+1}(t_{\la})n^{\al r+m}},\]
$b_{0}\ne0$ is the zeroth Fourier coefficient of $b$, and $u_{r}$ is given by \eqref{eq:urDef}.
\end{theorem}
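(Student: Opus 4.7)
The plan is to combine the Baxter--Schmidt integral representation of $D_n(a-\la)$ (as in \cite[\S2.3]{BoGr05} and in the two-term expansion of \cite{BoBo12}) with a contour deformation that extracts the isolated summand, and then to apply Fedoryuk's Theorem~\ref{th:Fedoryuk} termwise to a geometric-series expansion of $(a-\la)^{-1}$ near the singular point $t=1$. The Baxter--Schmidt formula produces an identity $D_n(a-\la)=(-1)^n b_{0}^{n+1}\be_n(\la)$ in which $\be_n$ is an oscillatory integral on $\bT$ whose integrand is essentially of the form $\frac{1}{t^{n+2}(a(t)-\la)}$; the factor $b_{0}^{n+1}$ absorbs the non-vanishing ``plus'' piece of the Wiener--Hopf factorization of the Hessenberg symbol $a$, so that the whole $\la$-dependence sits in the reciprocal $(a(t)-\la)^{-1}$ and the whole $n$-dependence in the power $t^{-(n+2)}$.

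The first step is to isolate the leading term by a contour deformation. By property~(ii) the function $a-\la$ extends analytically to $W\setminus\ol{\bD}$, and by the uniqueness statement recalled just above the theorem it has exactly one zero $t_\la$ in that region, simple because $a'(t_\la)\ne 0$. Pushing $\bT$ outward across $t_\la$, up to the boundary of $W$, picks up the residue
\[
-\Res_{t=t_\la}\frac{1}{t^{n+2}(a(t)-\la)}=-\frac{1}{t_\la^{n+2}a'(t_\la)},
\]
which is precisely the isolated summand in the statement. The deformed contour then hugs the singular point $t=1$; its portions that stay a positive distance from $1$ contribute $O(\vro^{-n})$ for some $\vro>1$, hence are negligible against every negative power of $n$.

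The remaining contribution is localized near $t=1$ by the cutoff $\phi$. On $\supp\phi$ one has $a(\e^{\i\tht})\to 0$ while $\la=a(t_\la)$ stays bounded away from zero, so the geometric series
\[
\frac{1}{a(\e^{\i\tht})-\la}=-\sum_{r=0}^{\nf}\frac{a^{r}(\e^{\i\tht})}{a^{r+1}(t_\la)}
\]
converges uniformly on $\supp\phi$ for $\de$ small enough. Using $b(\e^{\i\tht})=\e^{-\i\al\pi/2}\tht^{\al}f_{0}(\tht)f(\e^{\i\tht})$ near $\tht=0$, the $r$th term becomes $\tht^{\al r}$ times a $C^{\nf}$ factor times the oscillator coming from $t^{-(n+2)}$. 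Folding the integral over $[-\de,0]$ onto $[0,\de]$ via $\tht\mapsto-\tht$ (which is precisely what makes $u_r$ in \eqref{eq:urDef} appear with arguments $-\tht$ and $\e^{-\i\tht}$) converts each $r$-summand into an integral of the type covered by Theorem~\ref{th:Fedoryuk} with $\ga=\al r$, paired with its complex conjugate.

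Applying Theorem~\ref{th:Fedoryuk} termwise and taking $2\Re$ of each conjugate pair then yields
\[
-\frac{1}{\pi}\sum_{r,m=1}^{\nf}\frac{\Ga(\al r+m)\Re\{u_{r}^{(m-1)}(0)\e^{\frac{\i}{2}\pi(2\al r+m)}\}}{(m-1)!\,a^{r+1}(t_\la)\,n^{\al r+m}},
\]
the prefactor $\frac{1}{\pi}$ arising from the $2\Re$ combined with the $\frac{1}{2\pi}$ of the Fourier normalization. The main technical hurdles will be (a) the uniformity of the Fedoryuk remainder in $r$ required to legitimize termwise summation of an infinite asymptotic series, (b) a clean control, to every polynomial order in $n^{-1}$, of the rapidly decaying tail produced by the deformed contour away from $t=1$, and (c) precise tracking of the multi-valued phase of $(1-z)^{\al}$ (specified just after property~(iii)) through the substitution $\tht\mapsto-\tht$, so that the final phase reads exactly $\e^{\frac{\i}{2}\pi(2\al r+m)}$ and the sign in front of the double series comes out right.
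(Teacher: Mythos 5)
Your proposal follows essentially the same path as the paper: the paper takes the residue-plus-Fourier-integral representation of $\be_n(\la)$ directly from \cite[Lm.\,4.1, Lm.\,4.2]{BoBo12} and then applies Theorem~\ref{th:Fedoryuk} to each localized integral $I(r,n)$, whereas you additionally sketch how those cited lemmas are obtained (Baxter--Schmidt formula, outward contour deformation extracting the residue at $t_\la$, and geometric-series expansion of $(a-\la)^{-1}$ near $t=1$), which is exactly the argument of \cite{BoBo12}. The only minor slip is that Fedoryuk's parameter should be $\ga=\al r+1$ (the integrand is $\tht^{\al r}=\tht^{\ga-1}$), not $\ga=\al r$; the technical hurdles you list at the end are real but are handled in, or inherited from, the cited lemmas and the explicit phase bookkeeping the paper carries out after splitting $I(r,n)$ into the two folded integrals over $[0,\de]$.
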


\begin{proof}
Recall that $\cD(a)$ is the set of all complex points $z$ with $\wind_{\la}(z)\ne0$.
From \cite[Lm.\,4.1 and Lm.\,4.2]{BoBo12}, for any $\la\in\cD(a)$, we reach $D_{n}(a-\la)=(-1)^{n}b_{0}^{n+1} \be_{n}(\la)$, where $b_{0}\ne0$ is the zeroth Fourier coefficient of $b$ and $\be_{n}$ is a linear combination of Fourier-type integrals, admitting the representation
\begin{equation}\label{eq:Det}
\be_{n}(\la)=\frac{-1}{t_{\la}^{n+2} a'(t_{\la})}-\frac{1}{2\pi}\sum_{r=1}^{\nf}\frac{I(r,n)}{a^{r+1}(t_{\la})}+O(n^{-\nf}),
\end{equation}
the order term is uniform in $\la$ and
\[I(r,n)\equiv\int_{-\de}^{\de}b^{r}(\e^{\i\tht})\phi(\tht)\e^{-\i\tht(r+1)}\e^{-\i n\tht}\dif\tht.\]
We will expand the integral $I$ with the aid of Theorem~\ref{th:Fedoryuk}. Note that
\begin{eqnarray*}
I(r,n)&=&\e^{-\frac{\i}{2}\al r\pi} \int_{-\de}^{\de} \tht^{\al r} u_{r}(-\tht)\e^{-\i n\tht}\dif\tht\\
&=&\e^{-\frac{\i}{2}\al r\pi} \bigg\{\ol{\int_{0}^{\de} \tht^{\al r}\ol{u_{r}(-\tht)}\e^{\i n\tht}\dif\tht}+\e^{\i \al\pi r}\int_{0}^{\de} \tht^{\al r}u_{r}(\tht)\e^{\i n\tht}\dif\tht\bigg\},
\end{eqnarray*}
where $u_{r}$ is given by \eqref{eq:urDef}. Using the relation $\ol{u_{r}(\tht)}=u_{r}(-\tht)$ and applying Theorem~\ref{th:Fedoryuk} twice, we obtain
\begin{eqnarray*}
I(r,n)&\sim&\e^{-\frac{\i}{2}\al r\pi} \bigg\{\ol{\sum_{m=1}^{\nf}\frac{\Ga(\al r+m)}{(m-1)!}u_{r}^{(m-1)}(0)\e^{\frac{\i}{2}\pi(\al r+m)}n^{-\al r-m}}\\
&&+\e^{\i\al\pi r}\sum_{m=1}^{\nf} \frac{\Ga(\al r+m)}{(m-1)!}u_{r}^{(m-1)}(0)\e^{\frac{\i}{2}\pi(\al r+m)}n^{-\al r-m}\bigg\}\\
&=&2\sum_{m=1}^{\nf} \frac{\Ga(\al r+m)}{(m-1)!}\Re\{u_{r}^{(m-1)}(0)\e^{\frac{\i}{2}\pi(2\al r+m)}\}n^{-\al r-m}.
\end{eqnarray*}
Finally, replacing in \eqref{eq:Det}, we attain
\[\be_{n}(\la)\sim\frac{-1}{t_{\la}^{n+2} a'(t_{\la})}-\frac{1}{\pi}
\sum_{r,m=1}^{\nf}\frac{\Ga(\al r+m)}{(m-1)!a^{r+1}(t_{\la})}\Re\{u_{r}^{(m-1)}(0)\e^{\frac{\i}{2}\pi(2\al r+m)}\}n^{-\al r-m},\]
which proves the assertion of the theorem.
\end{proof}

The next step consists in solving the relation $D_{n}(a-\la)=0$ for $t_{\la}$, which, with the aid of Theorem~\ref{th:DetExp}, leads to
\[t_{\la}\sim\bigg\{\frac{t_{\la}^{2}a'(t_{\la})}{a^{2}(t_{\la})}
\sum_{r,m=0}^{\nf}\frac{\vro(r,m)}{a^{r}(t_{\la})n^{\al (r+1)+(m+1)}}\bigg\}^{-\frac{1}{n}},\]
where
\[\vro(r,m)\equiv-\frac{\Ga(\al(r+1)+(m+1))}{\pi m!}\Re\{u_{r}^{(m)}(0)\e^{\frac{\i}{2}\pi(2\al(r+1)+(m+1))}\}.\]
In order to expand the above expression and determine the powers of $n$ involved, we note that $\vro(0,0)=\frac{1}{\pi}\Ga(\al+1)\sin(\al\pi)f(1)\ne0$ and consequently we write
\begin{eqnarray}\label{eq:tlaExp2}
t_{\la}&\sim&\om_{n}^{j}\bigg\{\frac{\vro(0,0)t_{\la}^{2}a'(t_{\la})}{a^{2}(t_{\la})n^{\al+1}}\bigg\}^{-\frac{1}{n}}\bigg\{1+\frac{1}{\vro(0,0)}\hspace{-3mm}
\sum_{\substack{r,m=0\\(r,m)\ne(0,0)}}^{\nf}\frac{\vro(r,m)}{a^{r}(t_{\la})n^{\al r+m}}\bigg\}^{-\frac{1}{n}}\notag\\
&=&\om_{n}^{j}n^{(\al+1)\frac{1}{n}}\bigg\{\frac{a^{2}(t_{\la})}{\vro(0,0)t_{\la}^{2}a'(t_{\la})}\bigg\}^{\frac{1}{n}}
\bigg\{1+\frac{1}{\vro(0,0)}\hspace{-3mm}\sum_{\substack{r,m=0\\(r,m)\ne(0,0)}}^{\nf}\frac{\vro(r,m)}{a^{r}(t_{\la})n^{\al r+m}}\bigg\}^{-\frac{1}{n}},
\end{eqnarray}
where $j=0,\ldots,n-1$ and $\om_{n}=\exp(-\frac{2\pi\i}{n})$. Now we want to expand the previous expression until the outer exponents $\pm \frac{1}{n}$ disappear. The main aim here is to determine the powers of $n$ involved instead of the value of the respective exponents. One possible strategy for this aim is as follows. Take $h=\frac{1}{n}$. The expansion of the first term involves $h$ with all natural exponents, i.e. $h,h^{2},h^{3},h^{4},\ldots$, while the expansion of the second term involves all the exponents in the array
\[M\equiv\begin{bmatrix}
 & h^{\al} & h^{2\al} & h^{3\al} & \cdots\\
 h & h^{\al+1} & h^{2\al+1} & h^{3\al+1} & \cdots\\
 h^{2} & h^{\al+2} & h^{2\al+2} & h^{3\al+2} & \cdots\\
 h^{3} & h^{\al+3} & h^{2\al+3} & h^{3\al+3} & \cdots\\
 \vdots &\vdots & \vdots & \vdots & \ddots
 \end{bmatrix},\]
combined as $1+hM+h(h+1)M^{2}+h(h+1)(h+2)M^{3}+\ldots$. Note that the entire powers of the array $M$ will involve powers of $h$ following the next pattern
\[M^{2}=\begin{bmatrix}
 & & h^{2\al} & h^{3\al} & \cdots\\
  & h^{\al+1} & h^{2\al+1} & h^{3\al+1} & \cdots\\
 h^{2} & h^{\al+2} & h^{2\al+2} & h^{3\al+2} & \cdots\\
 h^{3} & h^{\al+3} & h^{2\al+3} & h^{3\al+3} & \cdots\\
 \vdots &\vdots & \vdots & \vdots & \ddots
 \end{bmatrix},\ 
 M^{3}=\begin{bmatrix}
 & & & h^{3\al} & \cdots\\
  & & h^{2\al+1} & h^{3\al+1} & \cdots\\
 & h^{\al+2} & h^{2\al+2} & h^{3\al+2} & \cdots\\
 h^{3} & h^{\al+3} & h^{2\al+3} & h^{3\al+3} & \cdots\\
 \vdots &\vdots & \vdots & \vdots & \ddots
 \end{bmatrix}.\]
Hence, after expanding, multiplying, and ordering, we seize that the second factor in \eqref{eq:tlaExp2} involves the constant $1$ plus terms involving $h$ with all the powers in the set $\{\al r+m\colon r,m\in\bZ,\,r\ge0,\,m\ge1\}$.
Finally, combining the expansions of the two factors of \eqref{eq:tlaExp2} we obtain
\begin{equation}\label{eq:Rec}
t_{\la}\sim\om_{n}^{j}n^{(\al+1)\frac{1}{n}}\bigg\{1+\sum_{\substack{r\ge0\\m\ge1}} \frac{\xi_{r,m}(t_{\la})}{n^{\al r+m}}\bigg\},
\end{equation}
where $\xi_{r,m}$ are functions in $C^{\nf}(W\setminus\ol{\bD})$, that can be determined explicitly. The previous relation is an implicit equation for $t_{\la}$. In \cite[Th.\,5.1]{BoBo12} it was proved that there exists a collection of mutually disjoint neighborhoods of each $\om_{n}^{j}$, $j=0,\ldots,n-1$, containing exactly one solution of \eqref{eq:Rec}. We are now ready to prove our main result.

\begin{proof}[Proof of Theorem~\ref{th:InnerExp}]
Let $t_{\la}$ be a solution of \eqref{eq:Rec}. Since $a(t_{\la})=\la$ must be an eigenvalue of $T_{n}(a)$, we conclude that there are exactly $n$ solutions and we denote them as $t_{j,n}\equiv t_{\la_{j,n}}$, $j=1,\ldots,n$. Now we iterate over \eqref{eq:Rec}. After expanding the term $n^{(\al+1)\frac{1}{n}}$ and multiplying, we reach
\begin{equation}\label{eq:RecF}
t_{\la}\sim\om_{n}^{j}\bigg\{1+\sum_{\ell=1}^{\nf}\ze^{(1)}_{l}(t_{\la})\frac{\log^{\ell}(n)}{n^{\ell}}+\sum_{r=0}^{\nf}\sum_{m=1}^{\nf}\sum_{\ell=0}^{m-1} \ze^{(2)}_{r,m,l}(t_{\la})\frac{\log^{\ell}(n)}{n^{\al r+m}}\bigg\},
\end{equation}
where the functions $\ze_{l}^{(1)}$ and $\ze_{r,m.l}^{(2)}$ belong to $C^{\nf}(W\setminus\ol{\bD})$ and can be determined explicitly. Since the expanded terms in \eqref{eq:RecF} have order $\frac{\log(n)}{n}$, our first iteration is $t_{\la}^{(1)}=\om_{n}^{j}+O(\frac{\log(n)}{n})$. The second iteration is
\begin{eqnarray*}
t_{\la}^{(2)}&\sim&\om_{n}^{j}n^{(\al+1)\frac{1}{n}}\bigg\{1+\sum_{\substack{r\ge0\\m\ge1}}^{\nf} \frac{\xi_{r,m}(t_{\la}^{(1)})}{n^{\al r+m}}\bigg\}\\
&=&\om_{n}^{j}n^{(\al+1)\frac{1}{n}}\bigg\{1+\sum_{r=0}^{\lfloor\frac{1}{\al}\rfloor}\frac{\xi_{r,1}(\om_{n}^{j})}{n^{\al r+1}}+O\Big(\frac{\log(n)}{n^{2}}\Big)\bigg\}\\
&=&\om_{n}^{j}\bigg\{1+\sum_{r=0}^{\lfloor\frac{1}{\al}\rfloor}\frac{\xi_{r,1}(\om_{n}^{j})}{n^{\al r+1}}+O\Big(\frac{\log(n)}{n^{2}}\Big)\bigg\}.
\end{eqnarray*}
To reveal the shape in the full expansion we need a third iteration,
\begin{eqnarray*}
t_{\la}^{(3)}&\sim&\om_{n}^{j}n^{(\al+1)\frac{1}{n}}\bigg\{1+\sum_{\substack{r\ge0\\m\ge1}}^{\nf} \frac{\xi_{r,m}(t_{\la}^{(2)})}{n^{\al r+m}}\bigg\}\\
&=&\om_{n}^{j}n^{(\al+1)\frac{1}{n}}\bigg\{1+\sum_{r=0}^{\lfloor\frac{2}{\al}\rfloor}\frac{\xi_{r,1}(\om_{n}^{j})}{n^{\al r+1}}+\sum_{r=0}^{\lfloor\frac{1}{\al}\rfloor} \sum_{\ell=0}^{1}\ze_{l,r}^{(3)}(\om_{n}^{j})\frac{\log^{\ell}(n)}{n^{\al r+2}}+O\Big(\frac{\log(n)}{n^{3}}\Big)\bigg\}\\
&=&\om_{n}^{j}\bigg\{1+\sum_{r=0}^{\lfloor\frac{2}{\al}\rfloor}\frac{\ze_{r}^{(4)}(\om_{n}^{j})}{n^{\al r+1}}+\sum_{r=0}^{\lfloor\frac{1}{\al}\rfloor} \sum_{\ell=0}^{1}\ze_{j,r}^{(5)}(\om_{n}^{j})\frac{\log^{\ell}(n)}{n^{\al r+2}}+O\Big(\frac{\log(n)}{n^{3}}\Big)\bigg\},
\end{eqnarray*}
where the functions $\ze_{l,r}^{(3)}$, $\ze_{r}^{(4)}$, and $\ze_{j,r}^{(5)}$ belong to $C^{\nf}(W\setminus\ol{\bD})$.
By continuing this process we finish the theorem.
\end{proof}

\subsection{Algorithm}\label{ssc:InnAlg}

As in previous works, one key observation is that $\om_{n}^{j}$ can be set unchanged under several combinations of $n$ and $j$, for example
\[\om_{n}^{j}=\om_{cn}^{cj},\]
for any constant $c$, thus we take $n_{\ell}\equiv \ell n_{1}$ and $j_{\ell}\equiv\ell j_{1}$ for $\ell\in\bN$. The index $j_{\ell}$ depends on $j_{1}$, and similarly, the matrix sizes $n_{\ell}$ depend on $n_{1}$, but for notation simplicity, we suppressed those dependencies. Therefore we have
\[\si_{j_{1}}\equiv\om_{n_{1}}^{j_{1}}=\om_{n_{2}}^{j_{2}}=\cdots=\om_{n_{k-1}}^{j_{k-1}},\]
see Figure~\ref{fg:Grid}.
The sequence $\{\om_{n}^{j}\}_{j=0}^{n-1}$ corresponds to the $n$ roots of the unity in the complex plane $\bC$ and is a regular partition of the unit circle $\bT$ with step angle of $\frac{2\pi}{n}$. We assume that $n_{1}$ and $k$ are fixed natural numbers and $n\gg n_{1}$.

Our algorithm is composed by three phases: precomputing, extrapolation, and interpolation. We now describe them in detail.

\begin{figure}[H]
\centering
\includegraphics[width=0.5\textwidth]{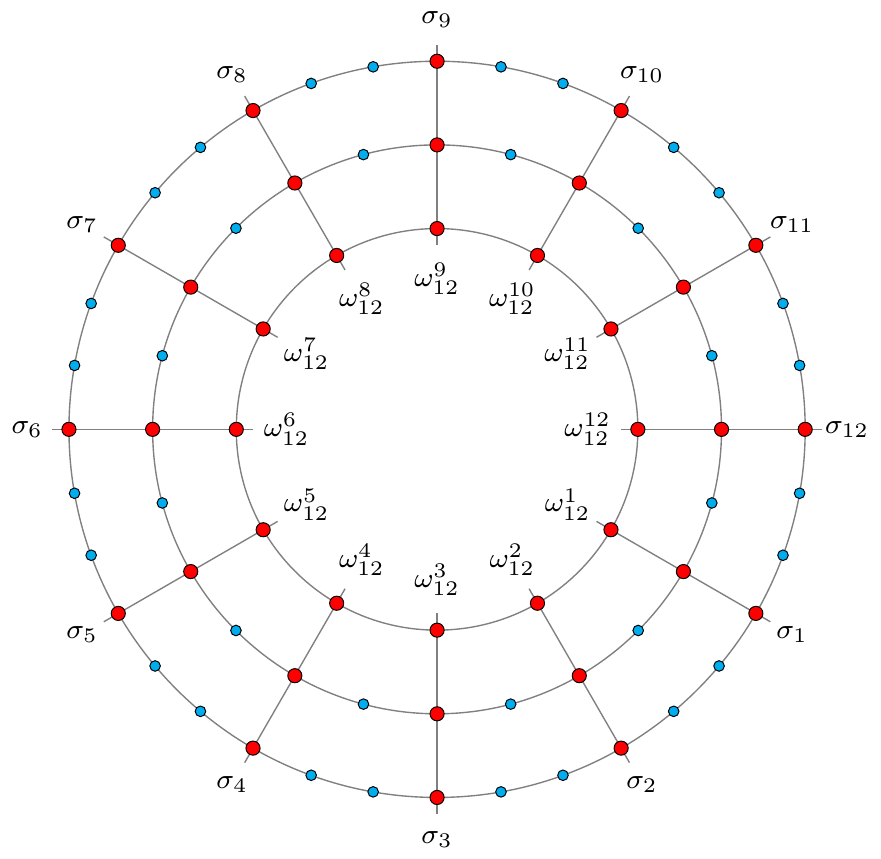}
\caption{For $n_{1}=12$ the regular circular grids $\{\om_{n_{\ell}}^{j}\}$ for $j=1,\ldots,n_{\ell}$ and $\ell=1,2,3$. In the precomputing phase we will need to calculate the eigenvalues of $T_{n_{\ell}}(a)$ for $\ell=1,\ldots,k-1$ corresponding to the blue and red dots combined, but in the interpolation phase, we will use only the eigenvalues corresponding to the red dots, that is $\la_{j_{\ell}}(T_{n_{\ell}}(a))$ for $j_{1}=1,\ldots,n_{1}$ and $\ell=1,\ldots,k-1$.}\label{fg:Grid}
\end{figure}

\noindent{\textbf{Precomputing.}
In this phase we need to calculate the eigenvalues of $T_{n}(a)$ for $n=n_{1},\ldots,n_{k-1}$. The numerical computations for the ``exact'' eigenvalues are conducted with the \texttt{BigFloat} data type in \textsc{Julia v.1.7.2} using 1024 bit precision which approximately corresponds to 300 decimal digits of accuracy. We use an AMD Epyc with 32 cores/64 threads and 1024 GB RAM. The remaining computations are carried using \textsc{Mathematica v13}.
\newpage

\noindent{\textbf{Extrapolation.} Here we need to simplify the expansion in Theorem \ref{th:InnerExp}, for that purpose, let $\xi_1(n),\xi_2(n),\ldots$, be the asymptotic ordering of the set
\[\Big\{\frac{\log^{\ell}(n)}{n^{\al r+m}}\colon r\ge0,\ m\ge1,\ \ell=0,\ldots,m-1\Big\}.\]
This ordering depends on $\al$, for example for $\al=\frac{3}{4}$ we obtain
\begin{align*}
\xi_{1}(n)&=\frac{1}{n}, & \xi_{2}(n)&=\frac{1}{n^{\al+1}}, & \xi_{3}(n)&=\frac{\log(n)}{n^{2}}, & \xi_{4}(n)&=\frac{1}{n^{2}}\\
\xi_{5}(n)&=\frac{1}{n^{2\al+1}}, & \xi_{6}(n)&=\frac{\log(n)}{n^{\al+2}}, & \xi_{7}(n)&=\frac{1}{n^{\al+2}}, & \xi_{8}(n)&=\frac{\log^{2}(n)}{n^{3}}.
\end{align*}
For each positive entire number $k$, rewrite the expansion as follows
\begin{equation}\label{eq:ActInnExp}
t_{j,n}=\om_{n}^{j} n^{(\al+1)\frac{1}{n}}\Big\{1+\sum_{s=1}^{k-1}c_{s}(\om_{n}^{j})\xi_{s}(n)+E_{k,j,n}\Big\},
\end{equation}
where $E_{k,j,n}=O(\xi_{k}(n))$ is a remainder or error term satisfying $|E_{k,j,n}|\le \ka\, \xi_{k}(n)$ for some constant $\ka$ depending only on the generating function $a$. For each fixed $j_{1}=1,\ldots,n_{1}$ let $\si_{j_{1}}\equiv\om_{n_{1}}^{j_{1}}=\cdots=\om_{n_{k-1}}^{j_{k-1}}$ (see the radially aligned red dots in Figure~\ref{fg:Grid}), and apply $k-1$ times the expansion \eqref{eq:ActInnExp}, obtaining
{\footnotesize\begin{eqnarray*}
t_{j_{1},n_{1}}&=&\si_{j_{1}}n_{1}^{(\al+1)\frac{1}{n_{1}}}\Big\{1+c_{1}(\si_{j_{1}})\xi_{1}(n_{1})+\cdots+c_{k-1}(\si_{j_{1}})\xi_{k-1}(n_{1})+E_{k,j_{1},n_{1}}\Big\},\\
t_{j_{2},n_{2}}&=&\si_{j_{1}}n_{2}^{(\al+1)\frac{1}{n_{2}}}\Big\{1+c_{1}(\si_{j_{1}})\xi_{1}(n_{2})+\cdots+c_{k-1}(\si_{j_{1}})\xi_{k-1}(n_{2})+E_{k,j_{2},n_{2}}\Big\},\\
&\vdots&\\
t_{j_{k-1},n_{k-1}}&=&\si_{j_{1}}n_{k-1}^{(\al+1)\frac{1}{n_{k-1}}}\Big\{1+c_{1}(\si_{j_{1}})\xi_{1}(n_{k-1})+\cdots+c_{k-1}(\si_{j_{1}})\xi_{k-1}(n_{k-1})+E_{k,j_{k-1},n_{k-1}}\Big\}.
\end{eqnarray*}}
Let $\hat c_{s}(\si_{j_{1}})$ be the approximation of $c_{s}(\si_{j_{1}})$ obtained by removing all the remainder terms $E_{k,j_{\ell},n_{\ell}}$ and solving the resulting linear system:
{\footnotesize\begin{equation*}
\begin{bmatrix}
\xi_{1}(n_{1}) & \xi_{2}(n_{1}) & \cdots & \xi_{k-1}(n_{1})\\[1ex]
\xi_{1}(n_{2}) & \xi_{2}(n_{2}) & \cdots & \xi_{k-1}(n_{2})\\[1ex]
\vdots & \vdots & \ddots & \vdots\\[1ex]
\xi_{1}(n_{k-1}) & \xi_{2}(n_{k-1}) & \cdots & \xi_{k-1}(n_{k-1})
\end{bmatrix}
\begin{bmatrix}
\hat c_{1}(\si_{j_{1}}) \\[1ex]
\hat c_{2}(\si_{j_{1}}) \\[1ex]
\vdots\\[1ex]
\hat c_{k-1}(\si_{j_{1}})
\end{bmatrix}
=
\si_{j_{1}}^{-1}\begin{bmatrix}
n_{1}^{-(\al+1)\frac{1}{n_{1}}}t_{j_{1},n_{1}} \\
n_{2}^{-(\al+1)\frac{1}{n_{2}}}t_{j_{2},n_{2}} \\
\vdots\\
n_{k-1}^{-(\al+1)\frac{1}{n_{k-1}}}t_{j_{k-1},n_{k-1}}
\end{bmatrix}
\!\!-\!\!
\begin{bmatrix}
1\\[1ex] 1\\[1ex] \vdots\\[1ex] 1
\end{bmatrix}.
\end{equation*}}
Thanks to properties (i)--(vi), the function $a$ is bijective, and hence the value of each $t_{j_{\ell},n_{\ell}}$ can be calculated as $a^{-1}(\la_{j_{\ell}}(T_{n_{\ell}}(a)))$. In general, the latter computation can be done numerically with any standard root finder (i.e. \texttt{FindRoot} in \textsc{Mathematica} or \texttt{fzero} in \textsc{Matlab}). As mentioned in previous works, a variant of this extrapolation strategy was first suggested by Albrecht Böttcher in \cite[\S7]{BoBo15a} and is analogous to the Richardson extrapolation employed in the context of Romberg integration \cite[\S3.4]{StBu10}.
\bigskip

\noindent\textbf{Interpolation.} The objective of this phase is to estimate $\hat c_{s}(\om_{n}^{j})$, for any index $j\in\{1,\ldots,n\}$, any $n$, and any level $s=1,\ldots,k-1$. If $\om_{n}^{j}$ coincides with one of the points in the grid $\{\om_{n_{1}}^{1},\ldots,\om_{n_{1}}^{n_{1}}\}$, then we have the approximations $\hat c_{s}(\om_{n}^{j})$ from the extrapolation phase for free. In any other case, we will do it by interpolating the data
\begin{equation*}
(\om_{n_{1}}^{1},\hat c_{s}(\om_{n_{1}}^{1})),(\om_{n_{1}}^{2},\hat c_{s}(\om_{n_{1}}^{2})),\ldots,(\om_{n_{1}}^{n_{1}},\hat c_{s}(\om_{n_{1}}^{n_{1}})),
\end{equation*}
and then evaluating the resulting polynomial at $\om_{n}^{j}$. This interpolation can be done in many ways, but to avoid spurious oscillations explained by the Runge phenomenon \cite[p.\,78]{Da75a}, and following the strategy of the previous works, we decided to do it considering only the $k-s+8$ nodes in the grid $\{\om_{n_{1}}^{1},\ldots,\om_{n_{1}}^{n_{1}}\}$ which are closest to $\om_{n}^{j}$. Those nodes can be determined uniquely unless $\om_{n}^{j}$ is the mid point of two consecutive nodes in the grid, in which case we can take any of the two possible choices.

Finally, our eigenvalue approximation with $k$ terms, is given by
\begin{equation}\label{eq:InnNAS}
\la_{j}^{\textsc{inn}(k)}(T_{n}(a))\equiv a\Big(\om_{n}^{j}n^{(\al+1)\frac{1}{n}}\Big\{1+\sum_{s=1}^{k-1}\hat c_{s}(\om_{n}^{j})\xi_{s}(n)\Big\}\Big).
\end{equation}

\begin{remark}
The approximation \eqref{eq:InnNAS} can be tuned in a number of ways. To guarantee acceptable overall accuracy, the precomputing phase must be done with a significant amount of precision digits, let us say 60. In the precomputing and extrapolation phases, we can manage more levels $k$ than in the interpolation phase, for example 2 or 3 additional levels will improve the final result. Finally, in the interpolation phase we can increase the number of interpolated points.
\end{remark}

\section{Extreme eigenvalues}\label{sc:extreme eigs}

In this section we develop a numerical algorithm to approximate the extreme eigenvalues of $T_{n}(a)$. In the paper \cite{BoGr18} the authors considered the generating function
\[a(t)=\frac{1}{t}(1-t)^{\al}f(t),\]
satisfying the following properties:
\begin{enumerate}[(i)]
\item The function $f$ belongs to $H^{\nf}$ with $f(0)\ne0$ and for some $\eps>0$, $f$ has an analytic continuation to the region
\[K_{\eps}\equiv B(1,\eps)\setminus\{x\in\bR\colon 1<x<1+\eps\},\]
and is continuous in $\hat K_{\eps}\equiv \ol{B(1,\eps)}\setminus\{x\in\bR\colon 1<x<1+\eps\}$. Additionally, $f_{\ph}(x)\equiv f(1+x\e^{\i\ph})$ belongs to the algebra $C^{2}[0,\eps]$ for each $\ph\in(-\pi,\pi]$. Here $B(z_{0},r)$ is the open disk centered at $z_{0}$ with radius $r$.
\item Let $0<\al<1$ be a constant and take the argument $-\al\pi<\arg(1-z)^{\al}\le\al\pi$ when $-\pi<\arg(1-z)\le\pi$.
\item The range $\cR(a)$ is a Jordan curve in $\bC$ with $\wind_{\la}(a)=-1$ for each $\la\in\cD(a)$.
\end{enumerate}
For the eigenvalues $\la_{j}(T_{n}(a))$ located in $W_{0}$, it was proved that
\begin{equation}\label{eq:ExtTh}
\la_{j}(T_{n}(a))=\frac{\La_{j}^{\al}}{(n+1)^{\al}}\{1+\De(\La_{j},n)\},
\end{equation}
where $\De(\La_{j},n)=O(n^{-\al})$, as $n\to\nf$, represents the remainder or error term, which satisfies the inequality $|\De(\La_{j},n)|\le\ka n^{-\al}$ for some constant $\ka$ depending only on $a$. The numbers $\La_{j}$ are certain roots of the function
\[F(\La)\equiv\frac{2\pi\i}{\al}\e^{\La}-\int_{C}\e^{-\La u}\be(u)\dif u,\]
where $C$ is the radial segment in $\bC$ with argument $\pm\frac{3}{4}\pi$ (the sign depending on the sign of the imaginary part of $\la_{j}(T_{n}(a))$) and $\be(u)\equiv(u^{\al}\e^{\i\al\pi}-1)^{-1}-(u^{\al}\e^{-\i\al\pi}-1)^{-1}$.

In our paper, instead of the exact calculation of the zeros $\La_{j}$, we use \eqref{eq:ExtTh} as an inspiration to formulate the following conjecture.

\begin{conjecture}\label{cj:Extreme}
Let $\eps$ be a small positive number, and $a$ be a generating function satisfying the previous properties $\textup{(i)}$--$\textup{(iii)}$. There exists an entire number $k\ge1$ such that the eigenvalues $\la_{j}(T_{n}(a))$ with $j=1,\ldots,\lfloor\eps n\rfloor$ or $j=n-\lfloor\eps n\rfloor+1,\ldots,n$, satisfy the expansion
\[\la_{j}(T_{n}(a))=\sum_{\ell=0}^{k-1} \frac{q_{\ell}(j)}{(n+1)^{\al+\ell}}+E_{k,j,n},\]
where
\begin{itemize}
\item $E_{k,j,n}$ is the remainder term which satisfies the inequality $|E_{k,j,n}|\le cn^{-\al-k}$ for some constant $c$ depending only on $\al$;
\item the coefficients $q_{\ell}$ $(\ell\ge1)$ are continuous functions from $\bT$ to $\bC$ depending only on $a$.
\end{itemize} 
\end{conjecture}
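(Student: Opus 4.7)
The plan is to refine, term by term, the single-term expansion \eqref{eq:ExtTh} from \cite{BoGr18}. As a starting point, I would recast the problem in the same form used in Theorem \ref{th:DetExp}: apply the identity $D_{n}(a-\la)=(-1)^{n}b_{0}^{n+1}\be_{n}(\la)$, but analyze $\be_{n}(\la)$ in the regime $\la\in W_{0}$, where the saddle point $t_{\la}$ approaches the singularity $t=1$ on $\bT$. In this regime the Fedoryuk expansion of Theorem \ref{th:Fedoryuk} is not directly applicable, because the effective integration window shrinks with $n$. The natural scaling is $\la=\La^{\al}/(n+1)^{\al}$ with $\La$ bounded; under this scaling, after the change of variable $u=(n+1)(t-1)/\i$ (or similar), the integrals $I(r,n)$ collapse to the contour-type integrals that produced the transcendental equation $F(\La)=0$ in \cite{BoGr18}.

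The main analytic step is then to upgrade $\De(\La_{j},n)=O(n^{-\al})$ to a complete expansion in $(n+1)^{-1}$. Concretely, after the scaling above, one retains higher Taylor coefficients of the smooth factors $f$ and $f_{0}$ at $t=1$ and expands the pre-factors $(1+x/n)^{\al}$, $\log(1+x/n)$, etc., to the desired order inside the contour integrals. This should produce
\[
\be_{n}\!\bigg(\frac{\La^{\al}}{(n+1)^{\al}}\bigg)\sim\frac{1}{(n+1)^{\al}}\sum_{\ell=0}^{\nf}\frac{G_{\ell}(\La)}{(n+1)^{\ell}},
\]
with $G_{0}$ proportional to $F$ and each $G_{\ell}$ meromorphic on the relevant sector, built from $F$, its derivatives, and the higher Taylor data of $a$ at $t=1$. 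Setting the right-hand side to zero and perturbing around a simple zero $\La_{j}^{(0)}$ of $F$ gives $\La_{j}=\La_{j}^{(0)}+\sum_{\ell\ge1}\eta_{j,\ell}(n+1)^{-\ell}$, and substitution into $\la=\La^{\al}/(n+1)^{\al}$ yields the claimed expansion; the coefficients $q_{\ell}(j)$ emerge as polynomial expressions in $\La_{j}^{(0)}$, $F'(\La_{j}^{(0)})^{-1}$, and the Taylor coefficients of $a$ at $t=1$. The description of $q_{\ell}$ as a continuous function $\bT\to\bC$ reflects the fact that for extreme indices $j=1,\ldots,\lfloor\eps n\rfloor$, the points $\om_{n}^{j}$ lie in a small arc of $\bT$ containing $1$, along which $\La_{j}^{(0)}$ varies continuously; in this sense $q_{\ell}(j)$ is shorthand for the restriction of a continuous function to the discrete sample $\om_{n}^{j}$, mirroring the role played by $p_{r,m,\ell}(\om_{n}^{j})$ in Theorem \ref{th:InnerExp}.

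The main obstacle is twofold. First, the zeros $\La_{j}$ of $F$ are not available in closed form, so the perturbation argument requires a priori lower bounds on $|F'(\La_{j}^{(0)})|$ and quantitative separation of the $\La_{j}^{(0)}$, together with uniform control as $j$ ranges up to $\lfloor\eps n\rfloor$; since $|\La_{j}^{(0)}|$ is expected to grow with $j$, these bounds degrade and must be tracked explicitly. Second, the inner and extreme regimes have incompatible small parameters, $n^{-1}$ and $(n+1)^{-\al}$ respectively, so the two expansions must eventually meet somewhere in the intermediate range of $j$; establishing the uniform bound $|E_{k,j,n}|\le cn^{-\al-k}$ all the way up to $j=\lfloor\eps n\rfloor$, rather than only for $j$ of order $1$ as in \eqref{eq:ExtTh}, appears to be precisely the difficulty that leads the authors to state this as a conjecture. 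A reasonable first attempt would prove the expansion rigorously for $j$ bounded and then attack the transition regime via a matched-asymptotics argument against Theorem \ref{th:InnerExp}.
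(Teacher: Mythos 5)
The paper does \emph{not} prove this statement: it is explicitly formulated as a conjecture, motivated heuristically by the single-term expansion \eqref{eq:ExtTh} from \cite{BoGr18}, and is then validated only numerically in Section~\ref{sc:num}. There is therefore no paper proof to compare against. You correctly recognize this and present a research program rather than a proof, which is the right posture. Your plan --- rescale $\la=\La^{\al}/(n+1)^{\al}$, pass to the contour integrals behind $F(\La)=0$, push the Taylor data of $f$ and $f_{0}$ at $t=1$ into higher orders of $(n+1)^{-1}$, and perturb around simple zeros $\La_{j}^{(0)}$ of $F$ --- is a natural and plausible route, and the obstacles you isolate (no closed form for $\La_{j}^{(0)}$, degrading lower bounds on $|F'(\La_{j}^{(0)})|$ as $|\La_{j}^{(0)}|$ grows with $j$, and above all the need to carry uniform control across the transition to $j\sim\eps n$ where the inner regime takes over) are exactly the reasons one would expect this to remain open.

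Two small cautions. First, the paper's own phrasing of the conjecture is internally ambiguous: the body writes $q_{\ell}(j)$ with $j$ an integer index, while the bullet describes $q_{\ell}$ as continuous functions on $\bT$; the algorithm in \eqref{eq:ExtExtPhase} in fact solves for $\hat q_{\ell}(j)$ at each fixed small integer $j$ with no interpolation over $\bT$ at all, consistent with $q_{1}(j)=\La_{j}^{\al}$ being a genuine sequence tied to the discrete zeros $\La_{j}$ of $F$. Your reading of $q_{\ell}(j)$ as the restriction of a continuous function to the sample $\om_{n}^{j}$ is a charitable reconciliation, but it is not what \eqref{eq:ExtTh} or the algorithm actually suggests, and one should not build the perturbation argument on that reading without justification. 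Second, your proposal to start from the $\be_{n}(\la)$ decomposition of Theorem~\ref{th:DetExp} is reasonable but not free: that decomposition and the error estimate in \eqref{eq:Det} were derived in \cite{BoBo12} for $\la$ bounded away from $W_{0}$, and the uniformity of the $O(n^{-\infty})$ remainder there is precisely what fails as $t_{\la}\to 1$; you would need to re-derive the determinant representation with remainder bounds adapted to the boundary layer rather than borrowing Theorem~\ref{th:DetExp} as stated.
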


\subsection{Algorithm}

For notation simplicity, take $h=\frac{1}{n+1}$. Since Conjecture~\ref{cj:Extreme} does not include a factor of the type $\om_{j}^{n}$ or $\pi jh$, we cannot handle the grid trick previously used, hence this algorithm is essentially different from the one in Subsection~\ref{ssc:InnAlg} and from the algorithms employed in previous works. 

\noindent{\textbf{Precomputing.}
This phase is similar to the precomputing phase in Subsection~\ref{ssc:InnAlg}, but we can work with a different number of levels $k$. However, in order to avoid a heavy notation, we will use the same letter anyway. In this phase we calculate the eigenvalues of $T_{n}(a)$ for $n=n_{1},\ldots,n_{k-1}$.
\bigskip

\noindent\textbf{Extrapolation.} For some fixed index $j_{0}\le n_{1}$ and each $j=1,\ldots,j_{0}$ or $j=n-j_{0}+1,\ldots,n$, apply $k$ times the expansion in Conjecture~\ref{cj:Extreme}, reaching
\begin{eqnarray*}
\la_{j}(T_{n_{1}}(a))&=&q_{1}(j)h_{1}^{\al}+q_{2}(j)h_{1}^{\al+1}+\cdots+q_{k}(j)h_{1}^{\al+k-1}+E_{k,j,n_{1}},\\
\la_{j}(T_{n_{2}}(a))&=&q_{1}(j)h_{2}^{\al}+q_{2}(j)h_{2}^{\al+1}+\cdots+q_{k}(j)h_{2}^{\al+k-1}+E_{k,j,n_{2}},\\
&\vdots&\\
\la_{j}(T_{n_{k}}(a))&=&q_{1}(j)h_{k}^{\al}+q_{2}(j)h_{k}^{\al+1}+\cdots+q_{k}(j)h_{k}^{\al+k-1}+E_{k,j,n_{k}},
\end{eqnarray*}
where $E_{k,j,n}=O(h^{\al+k})$ is a remainder term satisfying the inequality $|E_{k,j,n}|\le c\,h^{\al+k}$ for some constant $c$ depending only on $a$. As in the inner eigenvalue case, let $\hat q_{\ell}(j)$ be the approximation of $q_{\ell}(j)$ obtained by removing all the remainder terms $E_{k,j,n_{\ell}}$ and solving the resulting linear system:
\begin{equation}\label{eq:ExtExtPhase}
\begin{bmatrix}
h_{1}^{\al} & h_{1}^{\al+1} & \cdots & h_{1}^{\al+k-1}\\
h_{2}^{\al} & h_{2}^{\al+1} & \cdots & h_{2}^{\al+k-1}\\
\vdots & \vdots & \ddots & \vdots\\
h_{k}^{\al} & h_{k}^{\al+1} & \cdots & h_{k}^{\al+k-1}
\end{bmatrix}
\begin{bmatrix}
\hat q_{1}(j)\\
\hat q_{2}(j)\\
\vdots\\
\hat q_{k}(j)
\end{bmatrix}
=
\begin{bmatrix}
\la_{j}(T_{n_{1}}(a))\\
\la_{j}(T_{n_{2}}(a))\\
\vdots\\
\la_{j}(T_{n_{k}}(a))
\end{bmatrix}.
\end{equation}
Finally, our extreme eigenvalue approximation with $k$ terms is given by
\begin{equation}\label{eq:ExtNAS}
\la_{j}^{\textsc{ext}(k)}(T_{n}(a))\equiv \sum_{\ell=0}^{k-1} \frac{\hat q_{\ell}(j)}{(n+1)^{\al+\ell}}.
\end{equation}

\section{Numerical experiments}\label{sc:num}

In this section we test the accuracy of the proposed algorithms related to the formula (\ref{eq:InnNAS}) and to the formula \eqref{eq:ExtNAS}, for approximating the inner and extreme eigenvalues of $T_{n}(a)$, respectively. Our numerical experiments are performed by using \textsc{Mathematica v.12 (64~bit)} on a platform with 16GB RAM, using an Intel processor QuadCore IntelCore i7 2.6 GHz. For a generating function $a$ and a level $k$, consider the individual absolute errors
\[\textrm{AE}_{j,n}^{\textsc{inn}(k)}\equiv |\la_{j}(T_{n}(a))-\la_{j}^{\textsc{inn}(k)}(T_{n}(a))|,\quad
\textrm{AE}_{j,n}^{\textsc{ext}(k)}\equiv |\la_{j}(T_{n}(a))-\la_{j}^{\textsc{ext}(k)}(T_{n}(a))|.\]
To improve the readability of the obtained errors, we decide to define the maximum error
\[\textrm{AE}_{n}^{\textsc{inn}(k)}\equiv\max\{\textrm{AE}_{j,n}^{\textsc{inn}(k)}\colon j=\lfloor\eps n\rfloor,\ldots,n-\lfloor\eps n\rfloor\},\]
and the individual relative errors
\[\textrm{RE}_{j,n}^{\textsc{ext}(k)}\equiv\frac{\textrm{AE}_{j,n}^{\textsc{ext}(k)}}{|\la_{j}(T_{n}(a))|},\qquad
\textrm{RE}_{j,n}^{\textsc{inn}(k)}\equiv\frac{\textrm{AE}_{j,n}^{\textsc{inn}(k)}}{|\la_{j}(T_{n}(a))|}.\]
Given an $\eps>0$, our global individual relative error is
\[\textrm{RE}_{j,n}^{\eps}\equiv\begin{cases}
\textrm{RE}_{j,n}^{\textsc{ext}(k_{2})} &\quad\mbox{if}\quad j\in[1,\lfloor\eps n\rfloor]\cup[n-\lfloor\eps n\rfloor+1,n];\\
\textrm{RE}_{j,n}^{\textsc{inn}(k_{1})}&\quad\mbox{if}\quad j\in[\lfloor\eps n\rfloor+1,n-\lfloor\eps n\rfloor],
\end{cases}\]
where $k_{1}$ and $k_{2}$ are the largest available. Note that these constants are limited by the precomputing phase, and that $\eps$ must be selected within the range $0<\eps<\frac{n_{1}}{n}$.

For $\al\in\bR_{+}\setminus\bZ$, consider the generating function
\[a(t)=\frac{1}{t}(1-t)^{\al}.\]
A detailed description of this function and all the related neighborhoods appears in Figure~\ref{fg:Neigh}. Using the binomial theorem we obtain
\[a(t)=\sum_{j=-1}^{\nf}(-1)^{j+1}\binom{\al}{j+1} t^{j},\]
then the Fourier coefficients $a_{j}$ are given by $a_{j}=(-1)^{j+1}\binom{\al}{j+1}$ for $j\ge-1$ and $a_{j}=0$ in any other case. It is also worth noticing that since the matrices $T_{n}(a)$ are all real, then the eigenvalues appear as conjugate pairs and this explains the symmetry of the error in the related figures.

\setlength{\tabcolsep}{2pt}
\begin{table}[H]
\centering
{\footnotesize\begin{tabular}{rllllll}
\toprule
\multicolumn{1}{c}{$n$} & \multicolumn{1}{c}{$256$} & \multicolumn{1}{c}{$512$} & \multicolumn{1}{c}{$1024$} & \multicolumn{1}{c}{$2048$} & \multicolumn{1}{c}{$4096$} & \multicolumn{1}{c}{$8192$}\\ \midrule

$\textrm{AE}_{n}^{\textsc{inn}(1)}$ & $1.5783\x10^{-2}$ & $8.0820\x10^{-3}$ & $4.0956\x10^{-3}$ & $2.0631\x10^{-3}$ & $1.0358\x10^{-3}$ & $5.1906\x10^{-4}$ \\
$\xi_{1}(n)\textrm{AE}_{n}^{\textsc{inn}(1)}$ & $4.0405\x10^{0}$ & $4.1380\x10^{0}$ & $4.1938\x10^{0}$ & $4.2252\x10^{0}$ & $4.2426\x10^{0}$ & $4.2521\x10^{0}$\\ \midrule

$\textrm{AE}_{n}^{\textsc{inn}(2)}$ & $3.0955\x10^{-4}$ & $8.8936\x10^{-5}$ & $2.5152\x10^{-5}$ & $7.0464\x10^{-6}$ & $1.9630\x10^{-6}$ & $5.4512\x10^{-7}$\\
$\xi_{2}(n)\textrm{AE}_{n}^{\textsc{inn}(2)}$ & $5.0716\x10^{0}$ & $4.9012\x10^{0}$ & $4.6623\x10^{0}$ & $4.3934\x10^{0}$ & $4.1167\x10^{0}$ & $3.8452\x10^{0}$\\ \midrule

$\textrm{AE}_{n}^{\textsc{inn}(3)}$ & $2.3674\x10^{-4}$ & $6.6709\x10^{-5}$ & $1.8445\x10^{-5}$ & $5.0358\x10^{-6}$ & $1.3625\x10^{-6}$ & $3.6613\x10^{-7}$\\
$\xi_{3}(n)\textrm{AE}_{n}^{\textsc{inn}(3)}$ & $2.7980\x10^{0}$ & $2.8032\x10^{0}$ & $2.7904\x10^{0}$ & $2.7702\x10^{0}$ & $2.7482\x10^{0}$ & $2.7268\x10^{0}$\\ \midrule

$\textrm{AE}_{n}^{\textsc{inn}(4)}$ & $7.8781\x10^{-5}$ & $2.0791\x10^{-5}$ & $5.3823\x10^{-6}$ & $1.3770\x10^{-6}$ & $3.4968\x10^{-7}$ & $8.8421\x10^{-8}$\\
$\xi_{4}(n)\textrm{AE}_{n}^{\textsc{inn}(4)}$ & $5.1630\x10^{0}$ & $5.4503\x10^{0}$ & $5.6438\x10^{0}$ & $5.7754\x10^{0}$ & $5.8666\x10^{0}$ & $5.9338\x10^{0}$\\ \midrule

$\textrm{AE}_{n}^{\textsc{inn}(5)}$ & $1.1053\x10^{-5}$ & $2.0675\x10^{-6}$ & $3.8406\x10^{-7}$ & $7.1041\x10^{-8}$ & $1.3076\x10^{-8}$ & $2.3356\x10^{-9}$\\
$\xi_{5}(n)\textrm{AE}_{n}^{\textsc{inn}(5)}$ & $1.1589\x10^{1}$ & $1.2264\x10^{1}$ & $1.2887\x10^{1}$ & $1.3484\x10^{1}$ & $1.4040\x10^{1}$ & $1.4187\x10^{1}$\\ \midrule

$\textrm{AE}_{n}^{\textsc{inn}(6)}$ & $8.2836\x10^{-6}$ & $1.4196\x10^{-6}$ & $2.3767\x10^{-7}$ & $3.9140\x10^{-8}$ & $6.3845\x10^{-9}$ & $1.0800\x10^{-9}$ \\
$\xi_{6}(n)\textrm{AE}_{n}^{\textsc{inn}(6)}$ & $6.2656\x10^{0}$ & $6.4210\x10^{0}$ & $6.5083\x10^{0}$ & $6.5549\x10^{0}$ & $6.5934\x10^{0}$ & $6.9258\x10^{0}$\\ \bottomrule
\end{tabular}}
\caption{The maximum errors $\textup{AE}_{n}^{\textsc{inn}(k)}$, and maximum normalized errors $\xi_{k}(n)\textup{AE}_{n}^{\textsc{inn}(k)}$ for the levels $k=1,\ldots,6$, indexes $j\in\big(\frac{n}{8},\frac{7n}{8}\big)$, and different matrix sizes $n$, corresponding to the matrix $T_{n}(a)$ with $a(t)=\frac{1}{t}(1-t)^{\frac{3}{4}}$. We used a grid of size $n_{1}=100$.}\label{tb:InnErru}
\end{table}

Figures~\ref{fg:ErruExt} and \ref{fg:ErruInn} show a comparison between the individual relative errors $\textrm{RE}_{j,n}^{\textsc{ext}(k)}$ and $\textrm{RE}_{j,n}^{\textsc{inn}(k)}$, obtained by our expansions for different number of terms $k$. We can see that our algorithms are producing smaller errors as $k$ increases, and that the overall reached precision is very high.

Table~\ref{tb:InnErru} shows the maximum individual and normalized inner eigenvalue errors. We considered only the indexes $j\in\big(\frac{n}{8},\frac{7n}{8}\big)$, in this way we are excluding the eigenvalues belonging to the neighborhood $W_{0}$ corresponding to $\eps=\frac{1}{8}$. The almost constant behavior of the normalized errors is a verification of the expansion in Theorem~\ref{th:InnerExp}. We remark that the error $\textup{AE}^{\textsc{inn}(1)}_{n}$ has order $O(\frac{1}{n})$ which is the same that the distance between two consecutive eigenvalues have, which means that the approximation with one term can be only used to understand the distribution of the eigenvalues.

\begin{figure}[H]
\centering
\includegraphics[width=0.8\textwidth]{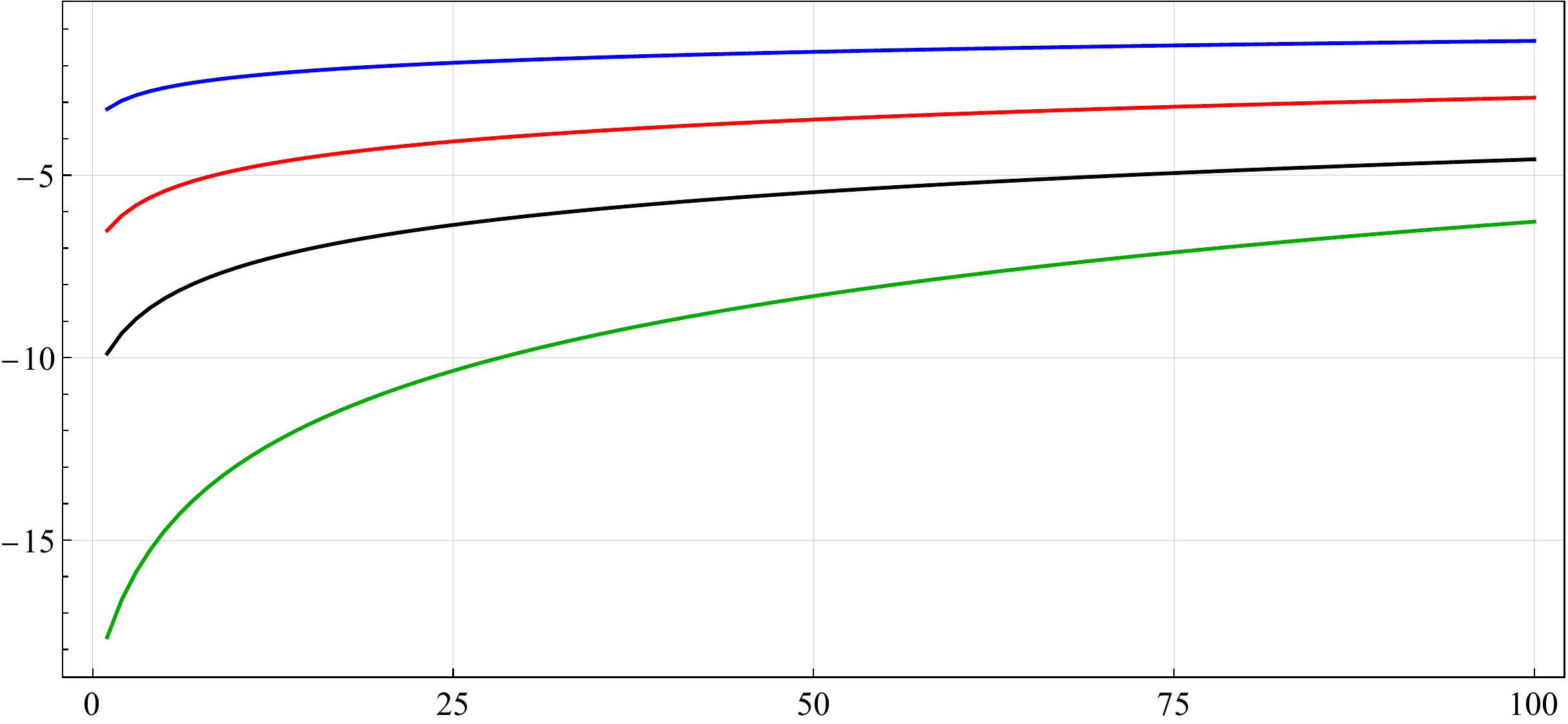}
\caption{The base-10 logarithm extreme eigenvalue relative errors $\textup{RE}^{\textsc{ext}(k)}_{j,n}$ $(j=1,\ldots,100)$ for the matrix $T_{n}(a)$ with $a(t)=\frac{1}{t}(1-t)^{\frac{3}{4}}$, a grid size $n_{1}=100$, a matrix size $n=8192$, and different number of terms $k=1$ (blue), $k=2$ (red), $k=3$ (black), and $k=6$ (green). All the errors are plotted over the grid $\big\{\frac{2\pi j}{n+1}\big\}_{j=1}^{n}$.}\label{fg:ErruExt}
\end{figure}
\begin{figure}[H]
\centering
\includegraphics[width=0.8\textwidth]{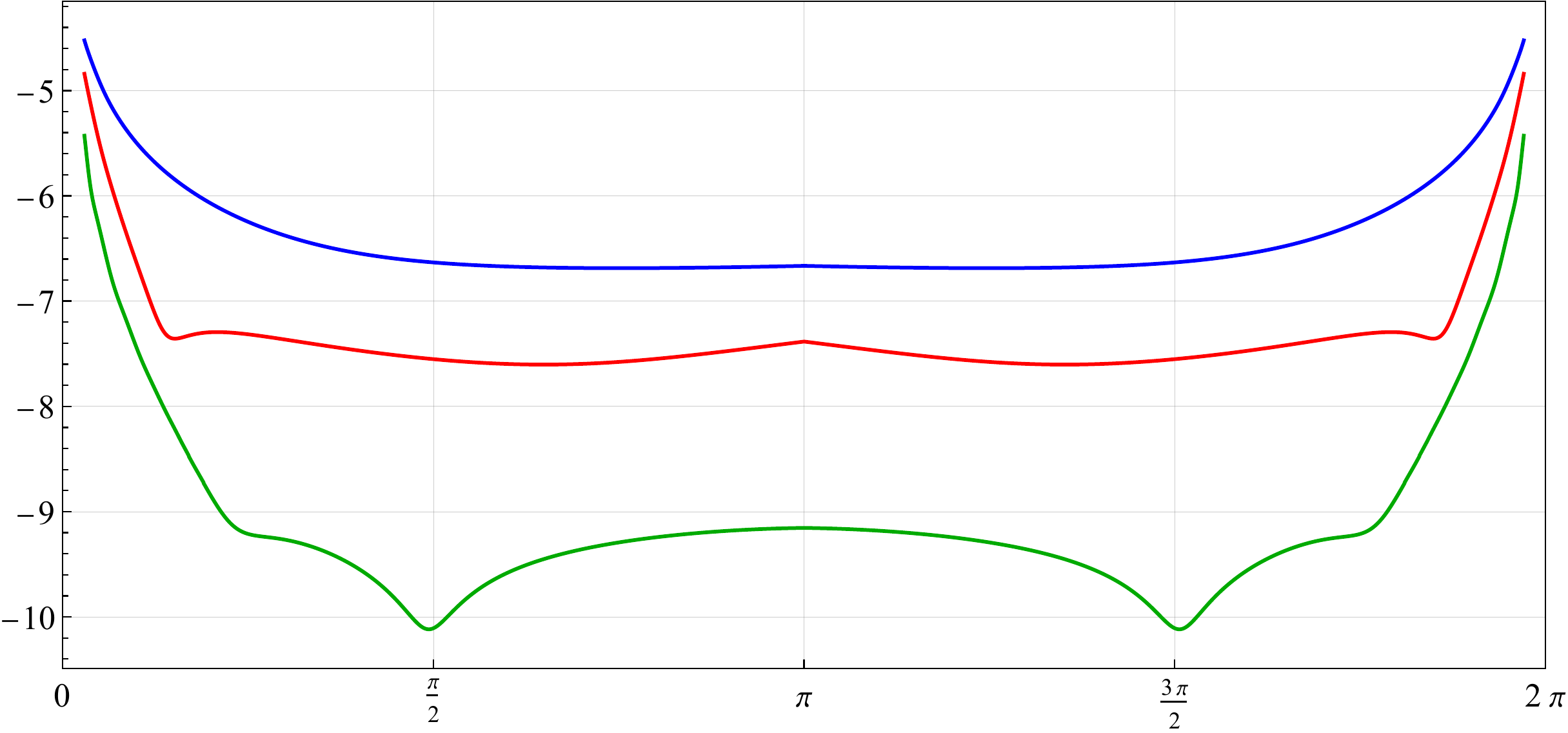}
\caption{The same as Figure~\ref{fg:ErruExt} for the inner eigenvalue relative errors $\textup{RE}_{j,n}^{\textsc{inn}(k)}$ with $\eps=\frac{1}{64}$, and for $k=3$ (blue), $k=4$ (red), and $k=5$ (green). All the errors are plotted over the grid $\big\{\frac{2\pi j}{n+1}\big\}_{j=1}^{n}$.}\label{fg:ErruInn}
\end{figure}
\begin{figure}[H]
\centering
\includegraphics[width=0.8\textwidth]{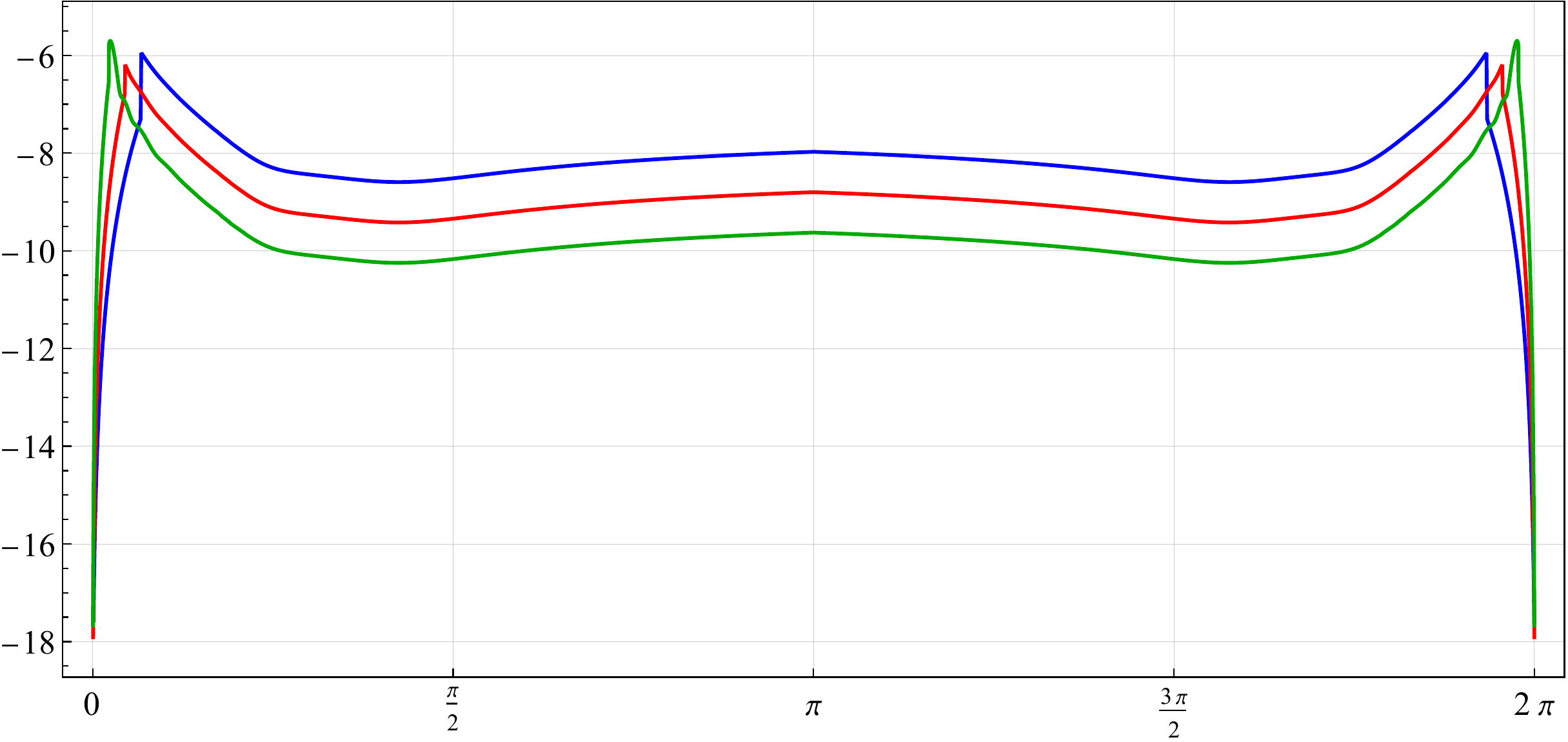}
\caption{The base-10 logarithm for the global individual relative errors $\textup{RE}_{j,n}^{\eps}$ for the matrix $T_{n}(a)$ with $a(t)=\frac{1}{t}(1-t)^{\frac{3}{4}}$, a grid size $n_{1}=100$, levels $k_{1}=7$, $k_{2}=7$, and different matrix sizes and epsilons: $n=2048$, $\eps=\frac{1}{30}$ (blue), $n=4096$, $\eps=\frac{1}{45}$ (red), and $n=8192$, $\eps=\frac{1}{90}$ (green). All the errors are plotted over the grid $\big\{\frac{2\pi j}{n+1}\big\}_{j=1}^{n}$.}\label{fg:E1InnErr}
\end{figure}

Figure~\ref{fg:E1InnErr} shows the global individual relative errors $\textrm{RE}_{j,n}^{\eps}$ for different matrix sizes $n$ and epsilons. We can see the high precision attained by our global algorithm, specially at the extreme eigenvalues.

\section{Future work}\label{sc:end}

The case $m<\al<m+1$ for $m=1,2,\ldots$ is very interesting for the applications. We will consider those cases in a future project.

It would be of interest to use such results also for the fast solution of the resulting large linear systems, so covering the preconditioned case and the setting of FDEs defined on $d$-dimensional domains with $d\ge 2$.

\section*{Acknowledgments}

Part of the numerical computations were carried in the computer center Jürgen Tischer of the mathematics department at Universidad del Valle.

The research of S.M. Grudsky was supported by CONACYT (Mexico) project “Ciencia de Frontera”
FORDECYT-PRONACES/61517/2020 and by Regional Mathematical Center of the Southern Federal University with the support of the Ministry of Science and Higher Education of Russia, Agreement 075-02-2021-1386.

The research of the third author is partly supported by the Italian INdAM-GNCS agency.

\bibliographystyle{acm}
\bibliography{Toeplitz}
\end{document}